\documentclass[12pt, reqno]{amsart}
\usepackage{amsfonts}
\usepackage{bbm}
\usepackage{amscd,amsfonts}
\usepackage{amssymb, eucal, amsfonts, amsmath, xypic, latexsym, tikz}
\usepackage{pifont}
\usepackage{mathrsfs,color}
\usepackage{amsthm,indentfirst,bm,fancyhdr,dsfont}
\usepackage{graphicx}
\usepackage[all]{xy}
\usepackage[CJKbookmarks=true]{hyperref}

\usepackage{mathrsfs}
\usepackage{amsmath}
\usepackage{amssymb}
\usepackage{hyperref}

\setlength{\hoffset}{0pt}
\setlength{\voffset}{0pt}
\setlength{\topmargin}{0pt}
\setlength{\oddsidemargin}{0in}
\setlength{\evensidemargin}{0in}
\setlength{\textheight}{8.75in}
\setlength{\textwidth}{6.5in}
\pagestyle{headings}

\newtheorem{theorem}{Theorem}[section]
\newtheorem{lemma}[theorem]{Lemma}
\newtheorem{prop}[theorem]{Proposition}

\newtheorem{corollary}[theorem]{Corollary}
\theoremstyle{definition}
\newtheorem{conj}[theorem]{Conjecture}
\newtheorem{defn}[theorem]{Definition}

\newtheorem{remark}[theorem]{Remark}

\numberwithin{equation}{section}

\def\ggg{\mathfrak{g}}
\def\zzz{\mathfrak{z}}

\def\gl{\mathfrak{gl}}

\def\ggg{\mathfrak{g}}

\def\ppp{\mathfrak{p}}
\def\qqq{\mathfrak{q}}

\def\hhh{\mathfrak{h}}
\def\aaa{\mathfrak{a}}
\def\bbb{\mathfrak{b}}
\def\nnn{\mathfrak{n}}

\def\calb{\mathcal{B}}

\def\caln{\mathcal{N}}

\def\cz{\mathcal {Z}}

\def\bk{\mathbf{k}}

\def\Fr{\textsf{Fr}}
\def\tsb{\textsf{B}}

\def\bbz{\mathbb{Z}}
\def\bbq{\mathbb{Q}}

\def\bk{\mathbf{k}}

\def\bo{{\bar 1}}
\def\bz{{\bar 0}}

\def\sft{\textsf{t}}

\def\tsx{\textsf{X}}
\def\tsy{\textsf{Y}}
\def\tsyy{\mathscr{Y}}

\def\ad{\mathsf{ad}}

\def\Ad{\text{Ad}}

\def\GL{\text{GL}}

\def\id{\mathsf{id}}

\def\tsb{\textsf{B}}
\def\tsbo{{}^1\textsf{B}}
\def\tsf{\textsf{F}}
\def\tsg{\textsf{G}}
\def\tsfd{\textsf{d}}

{\vskip-\lastskip\medskip
  \noindent
  }%
{\qed\par\medskip
  }




\def\GL{\text{\rm GL}}

\def\Ad{\textsf{Ad}}











\begin{document}

\title[The $1^{\text{st}}$ sKW conjecture for strange classical Lie superalgebras]
{Modular representations of strange classical  Lie superalgebras and the first super Kac-Weisfeiler conjecture
}

\author{ Ye Ren, Bin Shu, Fanlei Yang and An Zhang}

\address{YR: School of Mathematical Sciences, Ministry of Education Key Laboratory of Mathematics and Engineering Applications \& Shanghai Key Laboratory of PMMP,  East China Normal University, Shanghai 200241, China} \email{523922184@qq.com}
\address{BS: School of Mathematical Sciences, Ministry of Education Key Laboratory of Mathematics and Engineering Applications \& Shanghai Key Laboratory of PMMP,  East China Normal University, Shanghai 200241, China} \email{bshu@math.ecnu.edu.cn}
\address{FY: School of Mathematical Sciences, Ministry of Education Key Laboratory of Mathematics and Engineering Applications \& Shanghai Key Laboratory of PMMP,  East China Normal University, Shanghai 200241, China}\email{51215500044@stu.ecnu.edu.cn}
\address{AZ: School of Mathematical Sciences, Ministry of Education Key Laboratory of Mathematics and Engineering Applications \& Shanghai Key Laboratory of PMMP,  East China Normal University, Shanghai 200241, China}\email{52215500003@stu.ecnu.edu.cn}

\subjclass[2010]{17B50; 17B10, 17B20, 17B30 and 17B45}

\keywords{Modular representations, first super Kac-Weisfeiler conjecture, strange classical Lie superalgebas, queer Lie superalgebas, periplectic Lie superalgebras}



\thanks{This work is partially supported by the National Natural Science Foundation of China (Grant Nos. 12071136,  12271345), supported in part by Science and Technology Commission of Shanghai Municipality (No. 22DZ2229014).}

\begin{abstract} Suppose $\ggg=\ggg_\bz+\ggg_\bo$ is a Lie superalgebra of queer type or periplectic type over an algebraically closed field $\bk$ of characteristic $p>2$.
In this article, we initiate preliminarily to  investigate modular representations of periplectic Lie superalgebras 
 and then verify  the first super Kac-Weisfeiler conjecture on the maximal  dimensions of irreducible modules  for $\ggg$ proposed by the second-named author in \cite{Shu} where the conjecture is targeted at all finite-dimensional restricted Lie superalgebras over $\bk$, and already proved to be true for basic classical Lie superalgebras and completely solvable restricted Lie superalgebras.
	\end{abstract}
\maketitle
\setcounter{tocdepth}{1}\tableofcontents
\section*{Introduction}

\subsection{} Recall that there are two types of finite-dimensional classical simple Lie algebras over complex numbers: basic classical series, and strange classical series  (see \cite{Kac}). The difference of both lies in that the former ones admit non-degenerate even invariant symmetric forms, but the latter ones do not have such forms. So this difference leads to the difference between their representation theory.  Parallel to the case of  the complex numbers, one can define all classical Lie superalgebras in odd prime characteristic.  On the other side, those classical Lie superalgebras  naturally arises from algebraic supergroups of Chevalley type in odd prime characteristic (see \cite{FG1})

Since the works on irreducible representations of algebraic supergroups in odd characteristic, especially Wang-Zhao's work  focusing on modular Lie superalgebras of basic classical type, the study of irreducible representations of finite-dimensional Lie superalgebras in odd characteristic has a quite big progress (see \cite{B06}, \cite{BKl}, \cite{BKu}, \cite{CSW}, \cite{LS}, \cite{Shi}, \cite{SW}, \cite{WZ1}, \cite{WZ2}, \cite{ZengS1}, \cite{ZengS2}, \cite{Zhao}, {\sl{etc.}}). Nevertheless, their irreducible modules are not well-understood. Especially, the study for strange classical series has less progress. For example, although the Kac-Weiffeiler property becomes already a remarkable nature of  modular representations of basic classical Lie superalgebras according to Wang-Zhao's work \cite{WZ1}, but it seems to be difficult to judge with this property in strange classical series. The verification has neither  been finished for queer type (see \cite{WZ1, WZ2}), and nor been done at all for periplectic type. It is worthwhile mentioning that  there are some remarkable works  on modular representations for queer Lie superalgebras and algebraic supergroups (see \cite{B06}, \cite{BKl} for supegroups and \cite{WZ2} for Lie superalgeras).


\subsection{First super  Kac-Weisfeiler conjecture} As a super counterpart of the first Kac-Weisfeiler conjecture on the maximal dimensions of irreducible modules for restricted Lie algebras (see \cite{Kac2}),   the second-named author of the present paper proposed  a conjecture in  \cite{Shu} which says   for any finite-dimensional restricted Lie superalgebras $\ggg$ over an algebraically closed field $\bk$ of characteristic $p>2$, the maximal dimension of irreducible $\ggg$-modules  is  $p^{\frac{b_0}{2}}2^{\lfloor\frac{b_1}{2}\rfloor}$
(see \S\ref{sec: skw conj} for the meanings of   notations).
 This conjecture is a super version of the first Kac-Weisfeiler conjecture on irreducible modules of  restricted Lie algebras (see \cite{Kac2}). It was verified in the cases when the Lie superalgebras are basic classical Lie superalgebras and completely solvable restricted Lie superalgebras (see \cite{Shu}).

\subsection{Main results} One of the main purposes of this paper is to verify this conjecture for Lie superalgebras of strange classical series (see Theorem \ref{thm: kw conj for strange} and Corollary  \ref{cor: 5.2 p} and \ref{prop: first KW for derirved q}). Before that, we will preliminarily investigate irreducible representations of periplectic Lie superalgebras in the modular case, which is also one purpose of this  paper. As to the part of modular representations of periplectic Lie superalgebras, we obtain that all baby Verma modules for regular semisimple $p$-characters are irreducible (see Propositions \ref{prop: reg ss}).   This means that irreducible modules of maximal dimensions happen ``densely" in certain sense of algebraic geometry (see Remark \ref{rem: regular ss dense}). A more non-trivial result further says that the baby Verma modules in  regular nilpotent cases are also irreducible (see Proposition \ref{prop: reg nil}). These results are certainly important for the future research.

\section{Preliminaries}
Throughout the paper, the notions of vector spaces (resp. modules, subalgebras) means vector superspaces (resp. super-modules, super-subalgebras). For simplicity,
we will often omit the adjunct word ``super."  All vector spaces are defined over $\bk$ which is an algebraically closed field of characteristic $p>2$. For superspace $V=V_\bz+V_\bo$, we will mention the super-dimension of $V$ which means $\underline{\dim} V:=(\dim V_\bz|\dim V_\bo)$, in the meanwhile we mention the dimension of $V$ which means $\dim_\bk V:=\dim V_\bz+\dim V_\bo$. Throughout the paper, all Lie (super)algebras are finite-dimensional.  For the basic notions and notations, we follow  the following references \cite{CW}, \cite{FG1}, \cite{Kac}, \cite{Man}.

\subsection{Restricted Lie superalgebras}\label{restricted}
A Lie superalgebra $\ggg=\ggg_\bz\oplus\ggg_\bo$ is called a restricted one if $\ggg_\bz$ is a restricted Lie algebra and $\ggg_\bo$ is a restricted module of $\ggg_\bz$. Modular representations of classical Lie superalgebras are much related to the ones of reductive Lie algebras, the latter of which have been understood well (see \cite{FP88}, \cite{Jan97}, \cite{Kac2}, \cite{Pre},  \cite{SF}, \cite{WK}, and \cite{Zass}, {\sl{etc.}}).

Denote by $\cz(\ggg)$ the center of $U(\ggg)$, i.e. $\cz(\ggg):=\{u\in U(\ggg)\mid \ad x(u)=0\;\; \forall x\in \ggg\}$.
By the definition of a restricted Lie superalgebra, the whole $p$-center of $U(\ggg_0)$ falls in $\cz$, this is to say,  $x^p-x^{[p]}\in \cz, \forall x\in \ggg_{\bar 0}$. We denote by $\cz_\Fr$ the  $p$-center (or Frobenius center).
  Fix a basis $\{x_1,\cdots,x_s\}$ of $\ggg_\bz$ and a basis $\{y_1,\cdots,y_t\}$ of $\ggg_\bo$.
Set $\xi_{i}=x_{i}^{p}-x_{i}^{[p]}, i=1,\ldots, s$. The $p$-center
$\cz_\Fr$ is a polynomial ring
$\bk[\xi_{1},\ldots,\xi_{s}]$ generated by $\xi_{1},\ldots, \xi_{s}$.

By the PBW theorem, one easily knows that the enveloping
superalgebra $U(\ggg)$ is a free module over $\cz_\Fr$ with basis
\[x_{1}^{a_{1}}\cdots x_{s}^{a_{s}}y_{1}^{b_{1}}\cdots y_{t}^{b_{t}},
 0\leq a_i\leq p-1, \; b_{j}\in\{0,1\}\mbox{ for }i=1,\cdots,s, j=1,\cdots,t .\]

\subsection{Reduced enveloping algebras of restricted Lie superalgebras}\label{reduced}
Suppose $V$ is an irreducible $U(\ggg)$-module. By the above argument, $x^p-x^{[p]}$ for $x\in \ggg_\bz$, falls in the center of $\cz_\bz$. By Schur's Lemma we have that each $x^p-x^{[p]}$ for $x\in \ggg_\bz$ acts by a scalar $\chi(x)^p$ for some $\chi\in {\ggg_\bz}^*$. Such  $\chi$ is called the $p$-character of  $V$. Suppose $\chi\in {\ggg_\bz}^*$ is given, denote by $I_\chi$ the ideal of $U(\ggg)$ generated by the even central elements $x^p-x^{[p]}-\chi(x)^p$.  More generally, we can say that  a $U(\ggg)$-module $M$ is  a $\chi$-reduced module for a given $\chi\in \ggg^*$ if  for any $x\in \ggg_\bz$, $x^p-x^{[p]}$ acts by a scalar $\chi(x)^p$. All $\chi$-reduced modules for a given $\chi\in\ggg^*$ constitute a full subcategory of the $U(\ggg)$-module category.
The quotient algebra $U_\chi(\ggg):=U(\ggg)\slash I_\chi$ is called the reduced enveloping superalgebra of $p$-character $\chi$.  Then the $\chi$-reduced module category of $\ggg$ coincides with the $U_\chi(\ggg)$-module category. If $\hhh$ is a restricted Lie subalgebra of $\ggg$, we often take use of  $\chi|_{\hhh_{\bz}}$ for $\chi\in \ggg^*_\bz$ when considering the $U_\chi(\ggg)$-module category and its  objects  induced from $\hhh$-modules. By abuse of notations, we will simply write $\chi|_{\hhh_{\bz}}$ as $\chi$.

By the PBW theorem, the superalgebra $U_\chi(\ggg)$ has a basis \[x_{1}^{a_{1}}\cdots x_{s}^{a_{s}}y_{1}^{b_{1}}\cdots y_{t}^{b_{t}},
 0\leq a_{i}\leq p-1; b_{j}\in\{0,1\}\mbox{ for }i=1,\cdots s; j=1,\cdots,t.\]
And $\dim U_\chi(\ggg)=p^{\dim \ggg_\bz}2^{\dim \ggg_\bo}$.

 The reduced enveloping algebra corresponding to $\chi=0$ is  $U_0(\ggg)$.  We call it the restricted enveloping algebra of $\ggg$. The modules of $U_0(\ggg)$ are called restricted modules of $\ggg$. The following observation is clear.

\begin{lemma} The dimension of an irreducible module of a restricted Lie algebra $\ggg$ is not greater  than $p^{\dim \ggg_\bz}2^{\dim \ggg_\bo}$.
\end{lemma}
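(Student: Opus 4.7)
The plan is to reduce from $U(\ggg)$ to a reduced enveloping algebra $U_\chi(\ggg)$ of suitable $p$-character, and then use the trivial bound that an irreducible module over a finite-dimensional algebra has dimension at most the dimension of the algebra.

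More concretely, let $V$ be an irreducible $U(\ggg)$-module. I would first argue that $V$ has a well-defined $p$-character: since $\ggg$ is restricted, the elements $\xi_i = x_i^p - x_i^{[p]}$ for $i=1,\dots,s$ all lie in $\cz_\Fr \subset \cz(\ggg)$, so by Schur's Lemma (applied in the graded sense, which is valid because $\xi_i$ is even) each $\xi_i$ acts on $V$ by a scalar. Writing this scalar as $\chi(x_i)^p$ for a unique $\chi \in \ggg_\bz^*$ (using that $\bk$ is algebraically closed, hence $p$-th roots exist), $V$ is annihilated by the ideal $I_\chi$ and thus becomes a module over the quotient $U_\chi(\ggg) = U(\ggg)/I_\chi$.

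Next I would invoke the PBW-type basis description of $U_\chi(\ggg)$ already recorded in \S\ref{reduced}, which gives $\dim U_\chi(\ggg) = p^{\dim \ggg_\bz}\,2^{\dim \ggg_\bo}$. Since $V$ is irreducible, in particular cyclic, there is a surjection $U_\chi(\ggg) \twoheadrightarrow V$ sending $1 \mapsto v$ for any nonzero $v \in V$. Therefore
\[
\dim_\bk V \;\leq\; \dim_\bk U_\chi(\ggg) \;=\; p^{\dim \ggg_\bz}\,2^{\dim \ggg_\bo},
\]
which is the desired bound.

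There is essentially no obstacle; the only point requiring a small bit of care is the invocation of Schur's Lemma in the super setting, but since each $\xi_i$ is even it commutes in the ungraded sense with the whole $U(\ggg)$-action and acts on the irreducible $V$ as a scalar in the usual way. The rest is pure dimension counting via PBW, so the proof is a one-paragraph argument.
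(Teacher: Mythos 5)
Your proposal is correct and follows exactly the route the paper intends: the paper states the lemma immediately after introducing $p$-characters via Schur's Lemma and computing $\dim U_\chi(\ggg)=p^{\dim\ggg_\bz}2^{\dim\ggg_\bo}$, and declares the observation ``clear.'' Your write-up simply makes explicit the reduction to $U_\chi(\ggg)$ and the cyclic-quotient dimension bound that the paper leaves implicit.
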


\subsection{Question and conjecture} There is a natural question:
\begin{align*}
 \text{what is the maximal dimensions of irreducible modules for $\ggg$?}
\end{align*}
In \cite{Shu}, the author proposed  a conjecture that the maximal dimensions is
\begin{align}
p^{\frac{b_0}{2}}2^{{\lfloor\frac{b_1}{2}\rfloor}}=\max_{\theta\in \ggg_\bz^*}p^{\frac{b^\theta_0}{2}}2^{{\lfloor\frac{b^\theta_1}{2}\rfloor}}
\end{align}
where the precise meaning of the above notations can be seen in the paragraph prior to  (\ref{eq: 1 KW version}). This conjecture  can be regarded a super version of the first Kac-Weisfeiler conjecture which turned out true for basic classical Lie superalgebras and completely solvable restricted Lie superalgebras in \cite{Shu}. The main purpose is to verify this conjecture for Lie superalgebras of  strange classical  series.

\section{First look into modular representations of periplectic Lie superalgebras}
\subsection{Periplectic Lie superalgebras} In this section, we will present some basic facts on periplectic Lie superalgebras, and preliminarily investigate their modular representations.

\subsubsection{} For a given positive integer $n$, we have
\begin{align}\label{eq: perp n}
\tilde\ppp(n):=\{X:=\left( \begin{array}{cc} A  & B
\cr C & -A^\sft\end{array}\right)\in\gl(n|n)  \mid B=B^\sft, C=-C^\sft\}.
\end{align}

\begin{remark}\label{rem: for p} Set $\ppp(n)=[\tilde\ppp(n), \tilde\ppp(n)]$ which we call a derived periplectic Lie superalgebra.  Then $\tilde\ppp(n)=\ppp(n)+\bk\tsfd$ where $\tsfd$ is the $X$ in (\ref{eq: perp n}) with $B=0=C$ and $A$ equal to the identity matrix.
In the concluding section, we will separately check  the first super Kac-Weifeisler conjecture for derived periplectic and queer Lie superalgebras.

Recall that their representation theory over the complex field has been studied by various authors (see \cite{BDE}, \cite{CC}, \cite{Chen}, \cite{ES1}, \cite{ES2}, \cite{Gor},   \cite{Ser}, {\sl{etc.}}).
%
\end{remark}

%

\subsubsection{} From now on to the end of this section, we set $\ggg=\tilde\ppp(n)$.
 Then $\ggg$ is a Lie subalgebra of $\gl(n|n)$ with $\ggg=\ggg_\bz+\ggg_\bo$ where $\ggg_\bz=\ggg\cap \gl(n|n)$, and $\ggg_\bo=\ggg\cap \gl(n|n)$.
Precisely,
$$\ggg_\bz=\{\left( \begin{array}{cc} A  & 0
\cr 0 & -A^\sft\end{array}\right)  \mid A\in \gl(n)\}$$
and $\ggg_\bo=\ggg_{-1}\oplus \ggg_{1}$ with
\begin{align*}
&\ggg_1=\{\left( \begin{array}{cc} 0  & B
\cr 0 & 0\end{array}\right) \mid B\in \gl(n) \text{ satisfying } B=B^\sft\}\cr
&\ggg_{-1}=\{\left( \begin{array}{cc} 0  & 0
\cr C & 0\end{array}\right)  \mid C\in \gl(n)\text{ satisfying } C=-C^\sft\}.
\end{align*}
Clearly, $\ggg_\bz$ is isomorphic to $\gl(n)$ as Lie algebras, and $\dim\ggg_\bo=n^2$ with $\dim\ggg_{\pm1}$ equal to ${1\over 2}(n^2\pm n)$ respectively.
Set $\hhh\subset\ggg_\bz$  to be the standard maximal torus consisting of toral elements
$$ H=\left( \begin{array}{cc} \textsf{diag}(h_1,\ldots,h_n)  & 0
\cr 0 & \textsf{diag}(-h_1,\ldots,-h_n)\end{array}\right)\in\ggg_\bz \text{ with }(h_1,\ldots,h_n)\in \bk^n.$$
So we can talk about the triangular decomposition of $\ggg_\bz$ which is $\ggg_\bz=\nnn_\bz^-\oplus \hhh \oplus \nnn_\bz^+$, corresponding to the standard one of $\gl(n)$. Set $\bbb_\bz=\hhh\oplus \nnn_\bz^+$.

\subsection{Standard Cartan subalgebra and root system}\label{sec: root system of p}
The above $\hhh$ becomes a Cartan subalgebra of $\ggg$, called the standard Cartan subalgebra. With respect to $\hhh$, we can list the root systems $\Phi_0$, $\Phi_{\pm1}=\Phi(\ggg_{\pm1},\hhh)$ of $\ggg$, $\ggg_\bz$, and $\ggg_{\pm1}$ respectively:
$\Phi_0=\Phi_0^+\cup (-\Phi_0^+)$ with $\Phi_0=\{\epsilon_i-\epsilon_j\mid 1\leq i<j\leq n\}$,
$\Phi_1=\{\epsilon_i+\epsilon_j\mid 1\leq i\leq j\leq n\}$, and
$\Phi_{-1}=\{-\epsilon_i-\epsilon_j\mid 1\leq i<j\leq n\}$.
The root system $\Phi$ of $\ggg$ is equal to  $\Phi_0\cup\Phi_1\cup \Phi_{-1}$. There is a canonical  positive root system $\Phi^+_0\cup \Phi_1$ corresponding to $\caln^+:=\nnn_\bz^+\oplus \ggg_1$, and the negative root system $(-\Phi_0^+)\cup \Phi_{-1}$ corresponding to $\caln^-:=\nnn_\bz^-\oplus \ggg_{-1}$. This positive root system has a fundamental root system $$\Pi=\Pi_0\cup \{2\epsilon_n\}$$ where $\Pi_0=\{\epsilon_i-\epsilon_{i+1}\mid i=1,\ldots,n-1\}$ is the standard fundamental root system of $\gl(n)$.
Set $\Upsilon=\{2\epsilon_i\mid i=1,\ldots,n\}$ and
$$\mathcal{C}:=\mathcal{C}(\Pi\cup \Upsilon)$$
the complement of $\Pi\cup \Upsilon$ in $\Phi^+$.

\subsection{} Furthermore, $\ggg=\hhh\oplus \sum_{\alpha\in \Phi} \ggg_\alpha$ where all root space $\ggg_\alpha$ are one-dimensional.
Denote by $E_{k,l}$ the $(k,l)$th elementary matrix of size $2n\times 2n$, which means the matrix with all entries equal to zero unless the $(k,l)$-entry equal to $1$.
Then the generating root vector of $\ggg_{\alpha}$ is  easily expressed as,
\begin{align*}
E_{i,j}-E_{n+j,n+i}\in \ggg_\alpha\subset \ggg_\bz\cr
E_{j,i}-E_{n+i,n+j}\in \ggg_{-\alpha}\subset \ggg_\bz
\end{align*}
for $\alpha=\epsilon_i-\epsilon_j$ with $1\leq i<j\leq n$,
\begin{align*}
E_{i,j+n}+E_{j, n+i}\in \ggg_\alpha\subset\ggg_1 \cr
E_{i+n,j}-E_{j+n, i} \in \ggg_{-\alpha}\subset\ggg_{-1}
\end{align*}
for $\alpha=\epsilon_i+\epsilon_j, 1\leq i< j\leq n$,
and $E_{i,i+n}\in \ggg_\alpha\subset \ggg_1$ for $\alpha=2\epsilon_i$, $i=1,\ldots,n$.

For $\alpha=\epsilon_i-\epsilon_j\in\Phi^+_0$ or $\alpha=\epsilon_i+\epsilon_j \in \Phi_1$ with   $1\leq i<j\leq n$, we have $[\ggg_\alpha,\ggg_{-\alpha}]\subset \hhh$.
 choose $X_\alpha\in \ggg_\alpha$ and $X_{-\alpha}\in\ggg_{-\alpha}$ such that $[X_\alpha, X_{-\alpha}]=H_\alpha$ with $(H_\alpha,H_\alpha)=2$ where $(A,B):=\textsf{tr}(A,B)$ for $A,B\in \ggg_\bz$.


By a straightforward computation, the following observation is clear.

\begin{lemma} Let $\ggg=\tilde\ppp(n)$. 
The Lie superalgebra $\ggg$  has a gradation of
the additive group $\{-1,0,1\}\cong \bbz_3$ with $\ggg_\bo=\ggg_1\oplus\ggg_{-1}$ and $\ggg_\bz=\ggg_0$.
\end{lemma}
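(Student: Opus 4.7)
The plan is to verify the three defining properties of a grading by the additive group $\bbz_3$ on $\ggg=\tilde\ppp(n)$ with homogeneous components $\ggg_{-1},\ggg_0,\ggg_1$ as introduced in the paragraphs preceding the lemma. The direct sum decomposition $\ggg=\ggg_{-1}\oplus\ggg_0\oplus\ggg_1$ as superspaces is already built into the block description together with the identifications $\ggg_0=\ggg_\bz$ and $\ggg_\bo=\ggg_1\oplus\ggg_{-1}$. What remains is to check the bracket condition $[\ggg_i,\ggg_j]\subset\ggg_{i+j\bmod 3}$ for all $i,j\in\{-1,0,1\}$.

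First I would dispatch the easy inclusions. The relation $[\ggg_0,\ggg_0]\subset\ggg_0$ is immediate because $\ggg_0=\ggg_\bz$ is a Lie subalgebra isomorphic to $\gl(n)$. For $[\ggg_0,\ggg_{\pm 1}]\subset\ggg_{\pm 1}$, I would take a typical $X=\mathrm{diag}(A,-A^\sft)\in\ggg_0$ together with an element of $\ggg_1$ carried by a symmetric block $B=B^\sft$, and compute directly that the super-bracket has upper-right block $AB+BA^\sft$. Taking transposes shows that this block is again symmetric, so the result lies in $\ggg_1$. The $\ggg_{-1}$ case proceeds in parallel with skew-symmetric blocks.

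Next, for $[\ggg_1,\ggg_{-1}]\subset\ggg_0$, I would multiply out two off-diagonal generators $B\in\ggg_1$ and $C\in\ggg_{-1}$ using the odd--odd super-bracket $XY+YX$, obtaining a block-diagonal matrix whose upper and lower diagonal blocks are $BC$ and $CB$ respectively. A brief transpose check using $B=B^\sft$ and $C=-C^\sft$ yields $CB=-(BC)^\sft$, which is precisely the defining condition for membership in $\ggg_\bz=\ggg_0$. Finally, the key identity responsible for the grading being by $\bbz_3$ rather than merely $\bbz$ is the vanishing $[\ggg_{\pm 1},\ggg_{\pm 1}]=0$: elements of $\ggg_1$ (resp.\ $\ggg_{-1}$) are strictly upper-right (resp.\ strictly lower-left) in the $2\times 2$ block picture, so both $XY$ and $YX$ vanish. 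This places $[\ggg_1,\ggg_1]=0\subset\ggg_{-1}$ and $[\ggg_{-1},\ggg_{-1}]=0\subset\ggg_{1}$ automatically, in agreement with the modulo-$3$ indexing.

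No step poses a genuine obstacle; the argument reduces to a handful of $2\times 2$ block calculations in $\gl(n|n)$ together with the transpose identities $B=B^\sft$ and $C=-C^\sft$. The single conceptual observation worth emphasizing is the vanishing of the wraparound brackets $[\ggg_{\pm 1},\ggg_{\pm 1}]$, which both legitimizes the $\bbz_3$ interpretation and reflects the fact that $\ggg_{\pm 1}$ are abelian subalgebras of $\ggg$.
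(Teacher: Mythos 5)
Your proof is correct and is precisely the ``straightforward computation'' the paper alludes to without writing out: direct $2\times 2$ block computations together with the transpose identities $B=B^\sft$ and $C=-C^\sft$ verify each inclusion $[\ggg_i,\ggg_j]\subset\ggg_{i+j}$, and the vanishing of $[\ggg_{\pm1},\ggg_{\pm1}]$ makes the $\bbz$- and $\bbz_3$-indexings agree. Nothing in the paper suggests a different route.
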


\subsection{The $p$-structure of $\ggg=\tilde\ppp(n)$}
Recall $\ggg_\bz\cong \gl(n)$ is a restricted Lie algebra with canonical $p$-structure via the $p$-mapping on the basis elements  $X_\alpha^{[p]}=0$, $H_{i}^{[p]}=H_{i}$  for
\begin{align*}
&\{H_{i}=E_{ii}-E_{n+i,n+i}\in\hhh\mid i=1,\ldots,n-1\}\cup\cr
&\cup \{
X_\alpha=E_{ij}-E_{n+j,n+i}\in \ggg_\alpha\mid \alpha=\epsilon_i-\epsilon_j, 1\leq i\ne j\leq n\}.
\end{align*}
Furthermore, $\ggg_\bo$ is a restricted $\ggg_\bz$-module. This is to say, $\ggg$ is a restricted Lie superalgebra.

\subsection{Conjugation of $\ggg$}\label{sec: cong}
For $\aaa=\gl(n)$ or $\mathfrak{sl}(n)$, and correspondingly $\ggg_0=\tilde\ppp(n)_\bz$ or $\ppp(n)_\bz$, consider the isomorphism of Lie algebras $\phi: \aaa\rightarrow \ggg_\bz$ mapping $A$ onto $\left( \begin{array}{cc} A  & 0
\cr 0 & -A^\sft\end{array}\right)$. For
$g\in\GL(n)$
set $\bar{g}:=\left( \begin{array}{cc} g  & 0
\cr 0 & g^{-\sft}\end{array}\right)$.  Then the conjugation of $\ggg_0$ via $\GL(n)$ gives rise to the conjugation of $\overline\ggg$ as below:
\begin{align*}
\textsf{Ad}g(X):= \bar{g} X
{\bar{g}}^{-1}
\end{align*}
where $X=\left( \begin{array}{cc} A  & B
\cr C & -A^\sft\end{array}\right)\in {\ggg}$.
In this way, any element of $\GL(n)$ gives rise to an automorphism of $\bar\ggg$ compatible with the $p$-structure, i.e. $\textsf{Ad}g$ commutates with the $p$-mapping $[p]$.

Note that there is an $\GL(n)$-equivariant isomorphism between  $\ggg_\bz$ and $\gl(n)$ for $\ggg=\tilde\ppp(n)$. 
And  $\gl(n)$  and $\gl(n)^*$ 
are $\GL(n)$-equivariant isomorphic. By abuse of notations, we still take use of notation $\Ad(g)$ for the coadjoint action of $g\in \GL(n)$ on $\ggg_\bz^*$. Hence, the coadjoing action from $\GL(n)$ makes
\begin{align}\label{eq: Ad iso of red env}
U_\chi(\ggg)\cong U_{\textsf{Ad}g(\chi)}(\ggg)
\end{align}
for any $g\in \GL(n)$.

 For $\ggg=\tilde\ppp(n)$, 
 there is a $\GL(n)$-equivariant isomorphism between $\ggg_\bz$ and $\ggg_\bz^*$. 
 Then any $\chi\in \ggg_\bz^*$ has a Jordan standard form which means $\chi(\nnn_\bz^+)=0$. By (\ref{eq:  Ad iso of red env}), without loss of generality we may suppose
 \begin{align}\label{hyp: ann n+}
 \chi \text{  satisfies }\chi(\nnn_\bz^+)=0.
 \end{align}

\subsection{Example $\tilde\ppp(2)$} In this subsection, we precisely describe irreducible modules of $\tilde\ppp(2)$. Recall
\begin{align}\label{eq: example p(2)}
\tilde\ppp(2)=\{\left( \begin{array}{cc} A  & B
\cr C & -A^\sft\end{array}\right)\in\gl(2|2)  \mid B=B^\sft, C=-C^\sft\}. \}.
\end{align}
Especially $\tilde\ppp(2)=[\tilde\ppp(2), \tilde\ppp(2)]\oplus \bk\zzz$ for $\zzz=\text{diag}(1,1,-1,-1)$ which is in the center of $\tilde\ppp(2)$. In the following, we simply write $\tilde\ppp$ for  $\tilde\ppp(2)$, and $\ppp$ for $[\tilde\ppp(2),\tilde\ppp(2)]$. Then $\tilde\ppp_\bz=\ppp_\bz\oplus \bk\zzz$, and  $\ppp_\bz=\bk E+\bk F+\bk H$ is canonically  isomorphic to $\mathfrak{sl}(2)$ where under the isomorphism, $E,F, H$ respectively correspond to the basis elements of $\mathfrak{sl}(2)$:
$$e=\left( \begin{array}{cc} 0  & 1
\cr 0 & 0\end{array}\right);\; f=\left( \begin{array}{cc} 0  & 0
\cr 1 & 0\end{array}\right);\; \text{ and }h=\left( \begin{array}{cc} 1  & 0
\cr 0 & -1\end{array}\right).$$  And $\tilde\ppp_\bo=\ppp_\bo=(\bk X\oplus \bk Z_1\oplus \bk Z_2) \oplus \bk Y$ where $X,Z_1,Z_2$ and $Y$ are the standard root vectors corresponding to $\epsilon_1+\epsilon_2$, $2\epsilon_1$, $2\epsilon_2$, and $-(\epsilon_1+\epsilon_2$ respectively. Especially, $Y$ is of the form $\left( \begin{array}{cc} A  & B
\cr C & -A^\sft\end{array}\right)$ in (\ref{eq: example p(2)}) with $A=0=B$, and $C=\left( \begin{array}{cc} 0  & 1
\cr -1 & 0\end{array}\right)$.

Note that the trace function in $\mathfrak{gl}(2)$ gives rise to a non-degenerate bilinear form which is $\text{GL}(2)$-equivariant. Hence  the $\text{GL}(2)$-conjugacy class of any given  $\chi\in \tilde\ppp_\bz^*$ admits a representative element $\theta$ satisfying
$\theta(E)=0$. And $U_\chi(\ggg)$ is isomorphic to $U_\theta(\ggg)$. So we might as well suppose $\chi(E)=0$ in the following without loss of generality. In this case, by the classical results (see \cite{Jan97}) or direct computations,  irreducible $U_\chi(\tilde\ppp_\bz)$-module can be described as:
 \begin{itemize}
 \item[(1)] The $p$-dimensional spaces $L^0_\chi(\lambda):=U_\chi(\tilde\ppp_\bz)\otimes_{U_\chi(\bk E\oplus\bk H\oplus\bk\zzz)}\bk_\lambda$ with $\lambda\in \Lambda(\chi):=\{\lambda\in (\bk H\oplus\bk \zzz)^*\mid \lambda(\diamond)^p-\lambda(\diamond)=\chi(\diamond)^p, \text{ for }\diamond=H \text{ or }\zzz\}$ if either $\chi(\ppp_\bz)\ne0$  or $\chi(\ppp_\bz)=0$ with $\lambda(H)\ne0$. Here $E$ acts trivially on the one-dimensional space $\bk_\lambda$, and $\diamond$ acts on $\bk_\lambda$ by the scalar function $\lambda$;
\item[(2)] The one-dimensional spaces $L^0_\chi(c):=\bk_c$ if $\chi(\ppp_\bz)=0$ while $\lambda(H)=0$ and  $c=\lambda(\zzz)$. Here $\ppp_\bz$ acts trivially on $\bk_c$, and $\zzz$ acts on $\bk_c$ by scalar $c$.
    \end{itemize}
By unifying the notations with $\lambda\in \Lambda(\chi)$, we can define Kac modules
$$K_\chi(\lambda)=U_\chi(\tilde\ppp)\otimes_{U_\chi(\tilde\ppp_\bz\oplus \tilde\ppp_1)}L^0(\lambda)$$
where  $L^0(\lambda)$ is regarded an irreducible $U_\chi(\tilde\ppp_\bz\oplus \tilde\ppp_1)$-module with trivial $\tilde\ppp_1$-action. By a direct computation, we have the following observation.

\begin{lemma}  \begin{itemize}
\item[(1)] For any $\chi\in \ppp_\bz^*$, $\chi$ is $\text{GL}(2)$-conjugate to $\theta\in \ppp_\bz^*$ with $\theta(E)=0$, and $U_\chi(\ggg)$ is isomorphic to $U_\theta(\ggg)$.
\item[(2)] For any $\chi\in \tilde\ppp_\bz^*$ with $\chi(E)=0$,  and $\lambda\in \Lambda(\chi)$,
 the following statements hold.
\begin{itemize}
\item[(2.1)] If $\chi(\ppp_\bz)\ne 0$,  or  $\chi(\ppp_\bz)=0$ but $\lambda(H)\ne 0$,
then all $K_\chi(\lambda)$ are irreducible and $2p$-dimensional.

 \item[(2.2)] If $\chi(\ppp_\bz)=0$ with  $\lambda(H)=0$, then  $K_\chi(c)$ has one-dimensional irreducible submodule $Y\otimes\bk_c$, denoted by $L_\chi(c)$.
\item[(2.3)] Conversely, any irreducible $U_\chi(\tilde\ppp)$-module is isomorphic to $K_\chi(\lambda)$ in the case of (2.1), or to $L_\chi(c)$ in the case of (2.2).
\end{itemize}
\end{itemize}
\end{lemma}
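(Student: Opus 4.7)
The plan is to dispatch the four parts in order, with (2.1) being the main challenge. For Part (1), I would transport the problem to $\gl(2)^*$ via the $\GL(2)$-equivariant isomorphism $\phi$ of section \ref{sec: cong} together with the identification of $\gl(2)$ with its dual by the trace form. Every $2\times 2$ matrix is $\GL(2)$-conjugate to an upper triangular one, so every $\chi \in \ppp_\bz^*$ is $\GL(2)$-conjugate to a $\theta$ with $\theta(E)=0$; the identification $U_\chi(\ggg) \cong U_\theta(\ggg)$ is then (\ref{eq: Ad iso of red env}).

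The key structural observation for Part (2) is that $\dim \ggg_{-1} = 1$ with basis $Y$ when $n = 2$, so by PBW
\[K_\chi(\lambda) = L^0(\lambda) \oplus \bigl(Y \otimes L^0(\lambda)\bigr)\]
as $\tilde\ppp_\bz$-modules, giving $\dim K_\chi(\lambda) = 2 \dim L^0(\lambda)$. A short computation yields the brackets
\[[X,Y] = -H, \qquad [Z_1, Y] = E, \qquad [Z_2, Y] = -F,\]
together with $[H,Y] = [E,Y] = [F,Y] = 0$ and $[\zzz, Y] = -2Y$. Consequently the two $\tilde\ppp_\bz$-summands carry the same $\ppp_\bz$-action but differ in $\zzz$-eigenvalue, with values $c := \lambda(\zzz)$ on $L^0(\lambda)$ and $c - 2$ on $Y \otimes L^0(\lambda)$.

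For (2.1), I would consider a nonzero submodule $M$. Since $L^0(\lambda)$ is $\tilde\ppp_\bz$-irreducible, either $L^0(\lambda) \subset M$ --- in which case $Y \otimes 1_\lambda \in M$ generates $Y \otimes L^0(\lambda)$ under $\tilde\ppp_\bz$ and $M = K_\chi(\lambda)$ --- or $M \cap L^0(\lambda) = 0$ and $M$ embeds $\tilde\ppp_\bz$-equivariantly into $Y \otimes L^0(\lambda)$. The distinct $\zzz$-eigenvalues (which requires $p > 2$) force any such embedding to be the obvious one, giving $M = Y \otimes L^0(\lambda)$; but then $X \cdot (Y \otimes v) = -Hv \in L^0(\lambda)$, and since $H$ has eigenvalues $\lambda(H) - 2i$, $i = 0, \ldots, p-1$, on $L^0(\lambda)$, not all zero for $p > 2$, this image is nonzero and contradicts $\tilde\ppp$-stability of $M$. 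For (2.2), a direct check using the bracket formulas together with $\lambda(H) = \lambda(E) = \lambda(F) = 0$ confirms that $Y \otimes 1$ is annihilated by $\tilde\ppp_1$ and by $E, F, H$, and has $\zzz$-eigenvalue $c - 2$, so it spans a 1-dimensional submodule.

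For (2.3), the key input is that $\tilde\ppp_1$ has trivial super-bracket, so each of $X, Z_1, Z_2$ acts as a square-zero operator; taking successive kernels shows $M^{\tilde\ppp_1} \neq 0$ and is $\tilde\ppp_\bz$-stable. An irreducible $U_\chi(\tilde\ppp_\bz)$-submodule of $M^{\tilde\ppp_1}$ yields via Frobenius reciprocity a surjection $K_\chi(\lambda) \twoheadrightarrow M$; in case (2.1) this is an isomorphism by the irreducibility proved above, while in case (2.2) the target is 1-dimensional and is either $L_\chi(c)$ or the quotient $K_\chi(c)/L_\chi(c)$, the latter having $\zzz$-eigenvalue $c$ and therefore isomorphic to $L_\chi(c+2)$ via a parameter shift. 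The main obstacle is the graph-submodule elimination in (2.1), which crucially depends on the $\zzz$-eigenvalue separation (requiring $p > 2$) combined with the non-vanishing of $H$ on $L^0(\lambda)$.
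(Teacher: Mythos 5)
Your argument is the natural way to carry out the ``direct computation'' the paper leaves to the reader, and its skeleton is sound: by PBW (and $\dim\ggg_{-1}=1$) one gets $K_\chi(\lambda)=L^0(\lambda)\oplus\bigl(Y\otimes L^0(\lambda)\bigr)$; the brackets $[X,Y]=-H$, $[Z_1,Y]=E$, $[Z_2,Y]=-F$, $[\ppp_\bz,Y]=0$, $[\zzz,Y]=-2Y$ that you quote are correct; and once one observes that the two summands are the $\zzz$-eigenspaces for eigenvalues $c$ and $c-2$ (distinct since $p>2$), any $\tilde\ppp$-submodule $M$ is the direct sum of its two pieces. One small simplification: there is no ``graph submodule'' to eliminate --- since $M$ is $\zzz$-stable, $M\cap L^0(\lambda)=0$ \emph{already} forces $M\subset Y\otimes L^0(\lambda)$, after which $X(Y\otimes v)=-Hv$ and the non-vanishing of $H$ on $L^0(\lambda)$ give the contradiction you describe. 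Parts (1), (2.2), and (2.3) (via $M^{\tilde\ppp_1}\ne0$, Frobenius reciprocity, and the $\zzz$-shift identification $K_\chi(c)/L_\chi(c)\cong L_\chi(c+2)$) are likewise fine.

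There is, however, a genuine gap --- one inherited directly from the paper's preamble, and worth flagging. Your proof of (2.1) takes as input that $L^0_\chi(\lambda)$ is $\tilde\ppp_\bz$-irreducible under the stated hypotheses, but this is false when $\chi(\ppp_\bz)=0$ and $\lambda(H)\in\{1,\dots,p-2\}$: there $L^0_\chi(\lambda)$ is the restricted $\mathfrak{sl}(2)$-baby Verma module of highest weight $\lambda(H)$, which is irreducible only for $\lambda(H)=p-1$. And irreducibility of $L^0_\chi(\lambda)$ is in fact \emph{necessary} for that of $K_\chi(\lambda)$: if $W\subsetneq L^0_\chi(\lambda)$ is a nonzero $\tilde\ppp_\bz$-submodule, then $W\oplus(Y\otimes W)$ is a proper $\tilde\ppp$-submodule of $K_\chi(\lambda)$, since $Y$ carries $W$ into $Y\otimes W$ and kills $Y\otimes W$, while $X(Y\otimes w)=-Hw$, $Z_1(Y\otimes w)=Ew$, $Z_2(Y\otimes w)=-Fw$ all land in $W$. (Concretely, for $p=3$, $\chi=0$, $\lambda(H)=1$ the vector $Y\otimes f^2v_\lambda$ generates a $2$-dimensional submodule.) So the Lemma as stated, and hence your proof of it, only holds under the additional restriction that $\lambda(H)=p-1$ in the case $\chi(\ppp_\bz)=0$; wherever $L^0_\chi(\lambda)$ really is irreducible, your argument goes through.
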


\subsection{A general look at irreducible representations of $\ggg$} By the arguments in \S\ref{sec: cong}, we only need to consider $p$-characters annihilating $\nnn_\bz^+$ without loss of any generality. Fix such  a $p$-character $\chi$ with $\chi(\nnn_\bz^+)=0$.
Set $\Lambda(\chi)=\{\lambda\in \hhh^*\mid \lambda(H)^p-\lambda(H^{[p]})=\chi(H)^p\;\forall H\in \hhh\}$. Then
$$\Lambda(\chi)=\{\lambda\in \hhh^*\mid \lambda(H_i)^p-\lambda(H_i)=\chi(H_i)^p, i=1,\ldots,{n}\}.$$
Then for any $\lambda \in \Lambda(\chi)$ we have a baby Verma module $Z_\chi^0(\lambda):=U_\chi(\ggg_0)\otimes_{U_\chi(\bbb_\bz)}\bk_\lambda$ of $U_\chi(\ggg_0)$. Any simple module of $U_\chi(\ggg_0)$ is an irreducible quotient of $Z_\chi^0(\lambda)$. Fix an irreducible module $L_\chi^0(\lambda)$ with $\lambda\in \Lambda(\chi)$ for $U_\chi(\ggg_0)$
Set $\ggg_{\geq 0}:=\ggg_0\oplus \ggg_1$. Then $\ggg_{\geq0}$ is obviously a restricted Lie subalgebra of $\ggg$.
And $Z_\chi^0(\lambda)$ can be regarded a $U_\chi(\ggg_{\geq0})$-module  with trivial $\ggg_1$-action because $\ggg_1$ is actually a nilpotent ideal of $\ggg_{\geq 0}$. On the other side, consider $\calb:=\bbb_\bz\oplus\ggg_1$ which is obviously a solvable restricted subalgebra of $\ggg$ with nilpotent radical $\caln^+=\nnn_\bz^+\oplus \ggg_1$. Furthermore, any irreducible module over $U_\chi(\calb)$ is one-dimensional $\bk_\lambda$ for $\lambda\in \Lambda(\chi)$. Similarly, one can consider $\caln^{-}=\nnn_\bz^-\oplus \ggg_{-1}$ which is a restricted subalgebra of $\ggg$.

Consider the following generalized baby Verma module
$$Z_\chi(\lambda):=U_\chi(\ggg)\otimes_{U_\chi(\calb)}\bk_\lambda. $$
Then we have
\begin{align}\label{eq: baby Ver mod}
Z_\chi(\lambda)&= U_\chi(\ggg)\otimes_{U_\chi(\ggg_{\geq0})}(U_\chi(\ggg_{\geq0})
\otimes_{U_\chi(\calb)}\bk_\lambda)            \cr
&= U_\chi(\ggg)\otimes_{U_\chi(\ggg_{\geq0})}Z_\chi^0(\lambda)
\end{align}
where $Z_\chi^0(\lambda)$ is regarded a $U_\chi(\ggg_{\geq0})$ with trivial $\ggg_1$-action. This makes sense because $\ggg_1$ is actually a nilpotent ideal in $\ggg_{\geq 0}$. Note that $[\ggg_{-1}, \ggg_{-1}]=0$, hence the subalgebra generated by $\ggg_{-1}$ in $U_\chi(\ggg)$ is isomorphic to $\bigwedge^\bullet\ggg_{-1}$.
 As a vector space,  $Z_\chi(\lambda)$ consequently equals to $U_\chi(\caln^{-})\otimes \bk_\lambda=\bigwedge^\bullet \ggg_{-1}\otimes_\bk Z_\chi^0(\lambda)$.

The natural $\bbz$-gradation of the exterior algebra $\bigwedge^\bullet\ggg_{-1}=\sum_{k=0}^N \bigwedge^{k}\ggg_{-1}$ gives rise to the gradation of $Z_\chi(\lambda)$
$$Z_\chi(\lambda)=\sum_{k=0}^{N}Z_\chi(\lambda)_{[k]}$$ where  $N:=\dim\ggg_{-1}={{n(n-1)}\over 2}$, and $Z_\chi(\lambda)_{[k]}=\bigwedge^{k}\ggg_{-1}\otimes Z^0_\chi(\lambda)$. Clearly, each graded subspace $Z_\chi(\lambda)_{[k]}$ is a $U_\chi(\ggg_\bz)$-module.
In particular, $Z_\chi(\lambda)_{[0]}=1\otimes Z^0_{\chi}(\lambda)$ and $Z_{\chi}(\lambda)_{[N]}=Y\otimes Z^0_\chi(\lambda)$.

\begin{defn}\label{def: regular ss}
Call $\chi\in \ggg_\bz^*$ is regular semisimple if there exits $g\in \GL(n)$ such that $\Ad(g)\chi\in \hhh^*$ with $\Ad(g)\chi(\nnn_\bz^\pm)=0$ and $\Ad(g)\chi(H_i-H_j)\ne 0$ for any $1\leq i<j\leq n\}$.
\end{defn}
\begin{remark}\label{rem: regular ss dense} Note that $\ggg$ and $\ggg^*$ are $\text{GL}(n)$-equivariant linear isomorphic.   The set of regular semisimple elements
   in $\ggg^*$ forms a Zariski dense subset of $\ggg^*$ (see \cite[Corollary 2.1.13]{Coll}).
\end{remark}

{
Set $\tsx=\prod_{1\leq i<j\leq n} X_{\epsilon_i+\epsilon_j}$ and $\tsy=\prod_{1\leq i<j\leq n} X_{-(\epsilon_i+\epsilon_j)}$.  Let $v$ be a generator of one-dimensional vector space $1\otimes\bk_\lambda$ which is then a generator of $Z_\chi(\lambda)$ over $U_\chi(\ggg)$.
 By \cite[Lemma 3.1]{Ser},
 \begin{align}\label{eq: XYv}
\tsx\tsy v=\Omega(\lambda) v
\end{align}
where $\Omega(\lambda)=\prod_{1\leq i<j\leq n}(\lambda_i-\lambda_j+j-i-1)$, and $\lambda_i=\lambda(H_i)$ for $i=1,\ldots,n$.

\begin{lemma}\label{lem: top irr} Suppose $\chi$ is regular semisimple, then the above top homogeneous space $Z_{\chi}(\lambda)_{[N]}$ is an irreducible $U_\chi(\ggg_\bz)$-module.
\end{lemma}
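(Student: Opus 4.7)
The plan is to identify $Z_\chi(\lambda)_{[N]}$ with a baby Verma module for $\ggg_\bz\cong\gl(n)$ at a shifted weight, and then invoke the classical fact that baby Verma modules over a reductive Lie algebra at a regular semisimple $p$-character are irreducible. Since $[\ggg_\bz,\ggg_{-1}]\subseteq\ggg_{-1}$, the $\ggg_\bz$-action on $Z_\chi(\lambda)$ preserves the $\bbz$-grading coming from $\bigwedge^\bullet\ggg_{-1}$, so each $Z_\chi(\lambda)_{[k]}$ is automatically a $U_\chi(\ggg_\bz)$-submodule.

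Using that $U(\ggg_{-1})\cong\bigwedge^\bullet\ggg_{-1}$ (because $[\ggg_{-1},\ggg_{-1}]=0$), I will show that the map $(Y_1\wedge\cdots\wedge Y_k)\otimes w\mapsto Y_1\cdots Y_k\,w$ is an isomorphism of $U_\chi(\ggg_\bz)$-modules
$$\bigwedge^{k}\ggg_{-1}\otimes_\bk Z^0_\chi(\lambda)\ \xrightarrow{\sim}\ Z_\chi(\lambda)_{[k]},$$
with $\ggg_\bz$ acting diagonally on the left (by derivations on $\bigwedge^k\ggg_{-1}$ and by the given action on $Z^0_\chi(\lambda)$). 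For $k=N$, the top exterior power $\bigwedge^{N}\ggg_{-1}$ is one-dimensional with $\hhh$-weight
$$\mu:=\sum_{1\leq i<j\leq n}\bigl(-(\epsilon_i+\epsilon_j)\bigr)=-(n-1)\sum_{i=1}^n\epsilon_i,$$
and $\nnn_\bz^\pm$ act by zero (nilpotent operators on a one-dimensional space vanish). Hence $\bigwedge^{N}\ggg_{-1}\cong\bk_\mu$ is a restricted $U_0(\ggg_\bz)$-module, since $\mu(H_i)=-(n-1)\in\bbf_p$. The cyclic vector $1_\mu\otimes v_\lambda$ of $\bk_\mu\otimes Z^0_\chi(\lambda)$ then has weight $\lambda+\mu$ and is annihilated by $\nnn_\bz^+$; moreover the tensor product carries $p$-character $\chi$ via the Frobenius identity $(X\otimes 1+1\otimes X)^p=X^p\otimes 1+1\otimes X^p$. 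A dimension count ($p^{|\Phi_0^+|}=p^{\binom{n}{2}}$ on both sides) therefore yields $Z_\chi(\lambda)_{[N]}\cong Z^0_\chi(\lambda+\mu)$ as $U_\chi(\ggg_\bz)$-modules.

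Finally, I will invoke the standard result (see e.g.\ \cite{FP88}, \cite{Jan97}) that for the reductive Lie algebra $\ggg_\bz\cong\gl(n)$ at a regular semisimple $p$-character $\chi$, every baby Verma module $Z^0_\chi(\nu)$ with $\nu\in\Lambda(\chi)$ is irreducible---equivalently, attains the Kac--Weisfeiler maximal dimension $p^{|\Phi_0^+|}$. This immediately gives irreducibility of $Z_\chi(\lambda)_{[N]}$. The main point to verify carefully is the $U_\chi(\ggg_\bz)$-equivariance of the identification above: for $X\in\ggg_\bz$ the commutator identity
$$X\cdot(Y_1\cdots Y_N\,v)=\sum_{i=1}^N Y_1\cdots[X,Y_i]\cdots Y_N\,v+Y_1\cdots Y_N\cdot(Xv)$$
in $U(\ggg)$, together with $[X,Y_i]\in\ggg_{-1}$ and the identification of products in $U(\ggg_{-1})$ with wedge products, ensures that the first sum matches the derivation action on $\bigwedge^N\ggg_{-1}$ and hence the action on the left-hand side is exactly the diagonal tensor-product action.
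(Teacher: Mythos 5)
Your proof is correct and follows essentially the same route as the paper's. Both arguments identify $Z_\chi(\lambda)_{[N]}$ with the shifted baby Verma module $Z^0_\chi(\lambda+\delta)$ (with $\delta=\mu=-(n-1)\sum_i\epsilon_i$) via a map sending the generator to the generator plus a dimension count, and both then invoke the classical fact that baby Verma modules for $\gl(n)$ at a regular semisimple $p$-character are irreducible. The only cosmetic difference is that you package the key computation as the statement that $\ggg_\bz$ acts on $\bigwedge^N\ggg_{-1}$ through the one-dimensional weight $\mu$ with $\nnn_\bz^\pm$ acting by zero, and then use the tensor-product (Frobenius additivity of $p$-characters) viewpoint, whereas the paper works directly with the element $\tsy=\prod_{i<j}X_{-(\epsilon_i+\epsilon_j)}\in U(\ggg)$ and the relation $[\tsy,\nnn_\bz^\pm]=0$; these are two ways of saying the same thing.
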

\begin{proof} Under the assumption, $\chi$ becomes a regular semisimple $p$-character of $\ggg_\bz\cong \gl(n)$.  By a classical result (see \cite{FP88} or \cite{Jan97}), $Z^0_\chi(\lambda)$ is irreducible for any $\lambda\in \Lambda(\chi)$.

Consider $Z_{\chi}(\lambda)_{[N]}=\tsy\otimes Z^0_\chi(\lambda)$. Set $\delta=-\sum_{1\leq i<j\leq n}(\epsilon_i+\epsilon_j)$.  We want to show that $\tsy\otimes Z^0_\chi(\lambda)$ is isomorphic to $Z^0_\chi(\lambda+\delta)$, as a $U_\chi(\ggg_\bz)$-module. Note that any $Z^0_\chi(\mu)$ is a free $U(\nnn^-_\bz)$-module of rank one with generator  $v_\mu:=1\otimes 1$ in $1\otimes \bk_\mu$. Then $\tsy\otimes Z^0_\chi(\ggg_\bz)$ admits a one-dimensional  $U_\chi(\bbb_\bz)$-module generated by $\tsy\otimes v_\lambda$ because $[\tsy,\nnn_\bz^\pm]=0$. Furthermore, $\tsy\otimes v_\lambda$ is of weight $\lambda+\delta$.
Hence, we can define  a homomorphism of $U_\chi(\ggg_\bz)$-modules
 $$\phi: Z^0_\chi(\lambda+\delta)\rightarrow \tsy\otimes Z^0_\chi(\lambda),\; \text{ with } uv_{\lambda+\delta} \mapsto \tsy\otimes uv_\lambda,;\forall u\in U_\chi(\nnn^-_\bz).$$
Clearly, $\phi$ is surjective. Both of $Z^0_\chi(\lambda+\delta)$ and $\tsy\otimes Z^0_\chi(\lambda)$ have the same dimension $p^{\dim\nnn^-_\bz}$. Hence $\phi$ becomes an isomorphism.
\end{proof}
}

\begin{prop}\label{prop: reg ss}
\begin{itemize}

\item[(1)] Any irreducible $U_\theta(\ggg)$-module is a quotient of $Z_\chi(\theta)$ for any $\theta\in \ggg_\bz^*$ with $\theta(\nnn_\bz^+)=0$.

\item[(2)] If $\chi$ is regular semisimple, then $Z_\chi(\lambda)$ is an irreducible $U_\chi(\ggg)$-module.

\end{itemize}
\end{prop}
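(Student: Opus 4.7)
\emph{Proof plan.} Part~(1) is a standard Frobenius reciprocity argument. The subalgebra $\calb=\bbb_\bz+\ggg_1$ is solvable, and its nilpotent radical $\caln^+=\nnn_\bz^++\ggg_1$ acts nilpotently on any $U_\theta$-module with $\theta(\nnn_\bz^+)=0$: each $x\in\nnn_\bz^+$ has $x^p=\theta(x)^p=0$ in $U_\theta$, and each $y\in\ggg_1$ has $y^2=0$ because $[\ggg_1,\ggg_1]=0$. An Engel-type argument then produces, in any simple $U_\theta(\ggg)$-module $L$ restricted to $U_\theta(\calb)$, a one-dimensional submodule $\bk_\lambda$ with $\lambda\in\Lambda(\theta)$, and adjunction yields a surjection $Z_\theta(\lambda)\twoheadrightarrow L$.

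For Part~(2), the plan is to show that any nonzero $U_\chi(\ggg)$-submodule $M\subseteq Z_\chi(\lambda)$ contains the cyclic generator $v$, so that $M=Z_\chi(\lambda)$. The argument rests on three structural facts: (a) the decomposition $Z_\chi(\lambda)=\bigoplus_{k=0}^N Z_\chi(\lambda)_{[k]}$ is an honest $\bbz$-grading of the $\ggg$-action, with $\ggg_\bz$ preserving degree, $\ggg_{-1}$ strictly raising by $1$, and $\ggg_1$ strictly lowering by $1$ (no lower-order corrections, since $\ggg_{\pm 2}=0$); (b) by Lemma~\ref{lem: top irr}, the top piece $Z_\chi(\lambda)_{[N]}\cong Z^0_\chi(\lambda+\delta)$ is simple as a $U_\chi(\ggg_\bz)$-module; (c) the formula $\tsx\tsy v=\Omega(\lambda)v$ from \eqref{eq: XYv}, with $\Omega(\lambda)\neq 0$ under regular semisimplicity of $\chi$. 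The last point holds because $(\lambda_i-\lambda_j)^p-(\lambda_i-\lambda_j)=\chi(H_i-H_j)^p\neq 0$ forces $\lambda_i-\lambda_j\notin\bbf_p$, so every factor $\lambda_i-\lambda_j+(j-i-1)$ of $\Omega(\lambda)$ is nonzero.

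The core argument has two steps. \emph{Step~1} (surjection onto the top): the graded projection $\pi_N\colon Z_\chi(\lambda)\twoheadrightarrow Z_\chi(\lambda)_{[N]}$ is $U_\chi(\ggg_\bz)$-linear, so $\pi_N(M)$ is a $U_\chi(\ggg_\bz)$-submodule of the simple module $Z_\chi(\lambda)_{[N]}$. For a nonzero $m\in M$ of maximal degree $k_0$, write the top-degree component as $m_{k_0}=\sum_{|I|=k_0}Y_I\otimes w_I$ and pick any $I_0$ with $w_{I_0}\neq 0$. Acting by the complementary wedge $Y_{[N]\setminus I_0}\in U_\chi(\ggg_{-1})$ yields an element of $M$ whose projection equals $\pm\tsy\otimes w_{I_0}\neq 0$, since other contributions either vanish by overlap of indices or fall into $\bigoplus_{k<N}Z_\chi(\lambda)_{[k]}$. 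Hence $\pi_N(M)=Z_\chi(\lambda)_{[N]}$. \emph{Step~2} (recovery of $v$): the surjection provides $m_0=\tsy\otimes v+m'\in M$ with $m'\in\bigoplus_{k<N}Z_\chi(\lambda)_{[k]}$. Since $\tsx$ lowers degree by exactly $N$, it annihilates $m'$ and sends $\tsy\otimes v\mapsto\Omega(\lambda)v$ by \eqref{eq: XYv}; thus $\Omega(\lambda)v\in M$, and $\Omega(\lambda)\neq 0$ forces $v\in M$, whence $M=Z_\chi(\lambda)$ by cyclicity.

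The step I expect to be the main obstacle is Step~1, specifically the combinatorial verification that $Y_{[N]\setminus I_0}\cdot Y_I$ vanishes unless $I\subseteq I_0$, together with the degree count showing that among such $I$ only $I=I_0$ itself reaches top degree $N$. Once this is in place, the honest $\bbz$-grading of the action eliminates any need to track lower-order corrections, and Step~2 reduces to the one-line computation $\tsx\cdot m_0=\Omega(\lambda)v$ via \eqref{eq: XYv}.
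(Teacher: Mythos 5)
Your proof is correct and follows essentially the same route as the paper: both hinge on Lemma~\ref{lem: top irr} (irreducibility of the top graded piece over $U_\chi(\ggg_\bz)$), the identity \eqref{eq: XYv} together with the nonvanishing of $\Omega(\lambda)$ under regular semisimplicity, and the device of multiplying a nonzero element of an arbitrary submodule by a complementary wedge from $\bigwedge^\bullet\ggg_{-1}$ to reach the top graded piece and hence $\tsy\otimes v$. The only variation is a bookkeeping choice in Step~1: you pass through the $U_\chi(\ggg_\bz)$-linear projection $\pi_N$ after choosing $I_0$ from the highest-degree component, while the paper (implicitly) chooses the index set $I_0$ minimal with respect to inclusion so that $Y_{I_0^c}\cdot w$ lands exactly in degree $N$; both mechanisms accomplish the same thing.
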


\begin{proof} (1) is clear. For (2),
 we keep the notations in (\ref{eq: XYv}).
By (\ref{eq: Ad iso of red env}), we might as well assume the the regular semisimple $\chi\in\ggg_\bz^*$ lies in $\hhh^*$.
 Under the assumption again, $Z^0_\chi(\lambda)$ is irreducible for any $\lambda\in \Lambda(\chi)$.

   Note that $\lambda_i-\lambda_j\in \Lambda(\chi)$ which satisfies the equation in $\bk$
$$(\lambda_i-\lambda_j)^p-(\lambda_i-\lambda_j)=h_{ij}^p$$
where $h_{ij}=\chi(H_{i}-H_j)\ne 0$ by the assumption that $\chi$ is regular semisimple.
Hence $\lambda_i-\lambda_j+j-i-1$ is nonzero for all $1\leq i<j\leq n$. Consequently, $\Omega(\lambda)\ne 0$.

Now we claim that for any nonzero $w\in Z_\chi(\lambda)$, the submodule $W$ generated $w$ over $U_\chi(\ggg)$ coincides with $Z_\chi(\lambda)$. Hence $Z_\chi(\lambda)$ is irreducible.  In order to prove this claim, we first observe that $\tsy$ is a generator of one-dimensional space $\bigwedge^{\dim\ggg_{-1}}\ggg_{-1}$. And $Z_\chi(\lambda)$ is a free module of rank one over $U_\chi(\caln^-)$. Hence by multiplying  a suitable element from $\bigwedge^\bullet(\ggg_{-1})$ on $w$, one has that

\begin{align*}
\text{
there is a nonzero vector }w':=\tsy u_\tsy v\in W \text{ for some }u_\tsy\in U_\chi(\nnn_\bz^-).
 \end{align*}
 It is deduced that the $U_\chi(\ggg_\bz)$-submodule $W_0$ generated by $w'$ is contained in $W$.  By Lemma \ref{lem: top irr}, $W_0$ coincides with $\tsy Z_\chi^0(\lambda)$. Hence $\tsy v\in W$. Furthermore, $\tsx \tsy v=\Omega(\lambda)v\in W$ and $\Omega(\lambda)\ne 0$. We finally have $v\in W$. Consequently $W$ coincides with $Z_\chi(\lambda)$.

The proof is completed.
\end{proof}

\subsection{Regular nilpotent representations} Say that $\chi\in\ggg_\bz^*$ is a nilpotent $p$-character if the corresponding element $e\in\ggg_\bz\cong \gl(n)$ is nilpotent under the $\GL(n)$-equivariant linear isomorphism between $\gl(n)$ and $\gl(n)^*$. This is to say, $\chi(\bbb)=0$ for $\bbb=\hhh\oplus \nnn_\bz^+$.
The representation theory of $U_\chi(\ggg_\bz)$ with $\chi$ being nilpotent plays a critically important role in the whole representation theory of $\ggg_\bz$, due to a Morita equivalence theorem (see \cite[Theorem 3.2]{FP88}). We further say that a nilpotent $p$-character $\chi$ is regular nilpotent if the coadjoint orbit $\GL(n).\chi$ has the greatest dimension in all nilpotent orbits.  According to the theory of  nilpotent orbits, all regular nilpotent elements lie in the same orbit with representative element $\theta$ satisfying $\theta(X_{-\alpha})=1$ for all $\alpha\in \Pi_0$ and $\theta(X_\beta)=0$ for all $\beta\in \Phi_0\backslash \Pi_0$ (see \cite[\S4.14, \S6.7]{Jan3}).

In the following, for any given regular nilpotent $p$-character $\chi$ we might as well suppose that $\chi$ satisfies
\begin{align}\label{eq: reg nilp}
\chi(X_{-\alpha})=1 \text{ for  }\alpha\in \Pi_0 \text{ and } \chi(X_\beta)=0 \text{ for }\beta\in \Phi_0\backslash\Pi_0
 \end{align}
 without loss of generality because a $\GL(n)$-conjugation of $p$-characters gives  rise to the isomorphism of the corresponding reduced enveloping algebras.
   \begin{prop}\label{prop: reg nil}  Suppose  $\chi\in\mathfrak{g}_{\bz}^*$ is a regular nilpotent $p$-character as in (\ref{eq: reg nilp}), and $\lambda\in \Lambda(\chi)$. Then  $Z_\chi(\lambda)$ is irreducible.
   \end{prop}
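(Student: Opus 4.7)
The plan is to mirror the strategy of Proposition \ref{prop: reg ss}: for any nonzero submodule $W \subseteq Z_\chi(\lambda)$, first produce $\tsy v \in W$, then deduce $v \in W$, forcing $W = Z_\chi(\lambda)$. The first three ingredients transfer from the regular semisimple case almost verbatim. (i) By the classical modular representation theory of reductive Lie algebras with $p$-character of standard Levi form (see \cite{FP88, Jan97}), the baby Verma $Z^0_\chi(\mu)$ is irreducible for every $\mu \in \Lambda(\chi)$ when $\chi$ is regular nilpotent of the form (\ref{eq: reg nilp}). (ii) Lemma \ref{lem: top irr} applies verbatim, since its proof only uses this irreducibility together with $[\tsy,\nnn_\bz^\pm]=0$; thus $Z_\chi(\lambda)_{[N]} = \tsy\otimes Z^0_\chi(\lambda) \cong Z^0_\chi(\lambda+\delta)$ is irreducible over $U_\chi(\ggg_\bz)$. (iii) Acting on a nonzero $w \in W$ by a suitable monomial in $\bigwedge^\bullet\ggg_{-1}$ (as in the proof of Proposition \ref{prop: reg ss}) produces a nonzero $\tsy u_\tsy v \in W$, whence $\tsy v \in W$ by (ii).

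The crux is the remaining step, $v \in W$. In Proposition \ref{prop: reg ss} this followed from $\tsx\tsy v = \Omega(\lambda) v$ together with $\Omega(\lambda)\ne 0$. For regular nilpotent $\chi$ of the form (\ref{eq: reg nilp}) we have $\chi|_\hhh = 0$, so every $\lambda_i\in\mathbb{F}_p$ and $\Omega(\lambda)=\prod_{i<j}(\lambda_i-\lambda_j+j-i-1)$ can vanish in $\bk$ (for $\lambda=0$, the factors with $j=i+1$ are already zero). To circumvent this, I would exploit the odd positive roots $2\epsilon_k\in\Phi_1$, which have no counterparts in $\Phi_{-1}$ and are absent from $\tsx$. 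For each $a<k$, the super-commutator $[X_{2\epsilon_k},Y_{-(\epsilon_a+\epsilon_k)}]$ lies in $\ggg_{\epsilon_k-\epsilon_a}\subset\nnn_\bz^-$. The crucial observation is that $\chi(F_{-\alpha})=1$ for every simple $\alpha\in\Pi_0$, so $F_{-\alpha}^p=1$ in $U_\chi(\ggg)$, making the simple negative root vectors units in the reduced enveloping algebra. The strategy is to construct a square-free monomial $u\in U_\chi(\ggg_1)$ of total degree $N$, obtained from $\tsx$ by substituting certain $X_{\epsilon_i+\epsilon_j}$-factors by $X_{2\epsilon_k}$-factors, so that the super-commutator expansion of $u\tsy v$ contributes a nonzero element of $Z_\chi(\lambda)_{[0]} = Z^0_\chi(\lambda)$, expressed as a product of unit $F_{-\alpha}$-vectors acting on $v$. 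Combined with the irreducibility of $Z^0_\chi(\lambda)$ over $U_\chi(\ggg_\bz)$, this will force $v\in W$.

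The main obstacle is to realize this construction uniformly in $\lambda\in\Lambda(\chi)$, particularly for those $\lambda$ with $\Omega(\lambda)=0$. The super-commutator expansion of $u\tsy v$ can be organized as a sum over admissible ``matchings'' of the factors of $u$ to the $Y$-factors of $\tsy$, each contributing a product of brackets in $U_\chi(\ggg_\bz)$ applied to $v$; for $u=\tsx$ only the diagonal matching $X_{\epsilon_i+\epsilon_j}\leftrightarrow Y_{-(\epsilon_i+\epsilon_j)}$ contributes, yielding $\Omega(\lambda) v$. Each substitution $X_{\epsilon_i+\epsilon_j}\rightsquigarrow X_{2\epsilon_k}$ both alters the set of admissible matchings and introduces unit $F_{-\alpha}$-factors into some of them. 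The technical heart of the proof is to pick such a $u$ for every $\lambda$ and to verify that the resulting sum cannot cancel in $Z^0_\chi(\lambda)$; the unit property of the simple $F_{-\alpha}$'s, together with the PBW basis of $Z^0_\chi(\lambda)$ over $U_\chi(\nnn_\bz^-)$, is what ultimately guarantees nonvanishing and closes the argument.
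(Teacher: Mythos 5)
Your first three steps --- irreducibility of $Z^0_\chi(\lambda)$ for regular nilpotent $\chi$, transfer of Lemma~\ref{lem: top irr}, and obtaining $\tsy\otimes v_\lambda\in W$ --- coincide with the paper's steps (1) and (2). You have also isolated the crucial ingredient that powers the rest of the paper's proof: since $\chi(X_{-\alpha})=1$ for $\alpha\in\Pi_0$, the simple negative even root vectors satisfy $X_{-\alpha}^p=1$ and are hence units in $U_\chi(\ggg)$.

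However, the way you propose to exploit this ingredient is genuinely different from the paper's, and your version is not actually carried out. You propose to apply a square-free monomial $u\in U_\chi(\ggg_1)$ of total degree $N$ directly to $\tsy\otimes v_\lambda$ and to show that the resulting sum over ``matchings'' has a nonzero image in $Z^0_\chi(\lambda)$; you acknowledge that choosing $u$ for each $\lambda$ and controlling the cancellations is ``the technical heart'' and leave it open. That is exactly the part that cannot be waved through: already at $n=3$, $\lambda=0$, the naive choice $u=\tsx$ gives $\Omega(\lambda)=0$, and once you start substituting $X_{2\epsilon_k}$-factors, the brackets $[X_{2\epsilon_k},X_{-(\epsilon_a+\epsilon_k)}]$ may land in $\nnn^+_\bz$ or in non-simple negative root spaces as well as in simple ones, so the nonvanishing of the matching sum really needs an argument, and none is given.

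The paper circumvents this problem entirely by a two-stage construction that you do not consider. From $\tsy\otimes v_\lambda\in M$, instead of applying an element of $U_\chi(\ggg_1)$, they first multiply by $\prod_{\beta\in\Phi_0^+\setminus\Pi_0}X_{-\beta}^{p-1}\in U_\chi(\nnn^-_\bz)$ and reorder, landing on the single PBW monomial of maximal degree $\tsyy\otimes v_\lambda$, with $\tsyy=\prod_{\beta\in\mathcal C}X_{-\beta}^{\pi(\beta)}$ where $\pi(\beta)=p-1$ for even $\beta$ and $1$ for odd $\beta$. They then peel off one factor $X_{-\beta_k}^{\pi(\beta_k)}$ at a time by an operator $Z_k$ that mixes even and odd pieces: $Z_k=X_{-\alpha'}^{p-1}X_{\alpha''}$ when $\beta_k\in\Phi_1$ and $Z_k=X_{-\alpha'}X_{\alpha''}^{p-1}$ when $\beta_k\in\Phi_0^+\setminus\Pi_0$, where $\beta_k=\alpha'+\alpha''$ with $\alpha'\in\Pi_0$ simple. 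The unit property enters here: $Z_kX_{-\beta_k}^{\pi(\beta_k)}\otimes v_\lambda=1\otimes v_\lambda$ precisely because $X_{-\alpha'}^p=1$. Crucially, a separate combinatorial statement (Lemma~\ref{lem: 0.2}), built on a total order on $\mathcal C$, guarantees that $Z_k$ commutes with $\tsyy_{k+1}$, so each step of the peeling is a single clean term and no sum over matchings ever arises. This is what makes the computation close, and it is the piece missing from your sketch. To repair your argument you would either have to supply a cancellation-free analysis of your matching sum, or adopt the paper's strategy of padding with even factors and using the ordered operators $Z_k$ together with the commutativity lemma.
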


   \begin{proof} We proceed the arguments by different steps.

   (1) By the same argument around in (\ref{eq: baby Ver mod}), it can be shown that $Z_\chi(\lambda)= U_\chi(\ggg)\otimes_{U_\chi(\ggg_{\geq0})}Z_\chi^0(\lambda)$. Note that $Z_\chi^0(\lambda)$ is irreducible $U_\chi(\ggg_\bz)$-module because $\chi$ is regular nilpotent (see \cite[Theorem 4.2]{FP88} or \cite{Jan97}). Consequently, $Z_\chi^0(\lambda)$ is irreducible $U_\chi(\ggg_{\geq0})$-module.

   (2) Due to (1), by the same arguments as in the proof of Lemma \ref{lem: top irr} we have that
     $Z_{\chi}(\lambda)_{[N]}=\tsy\otimes Z^0_\chi(\lambda)$ is an irreducible $U_\chi(\ggg_\bz)$-module. On the other side, by  the exterior product property of $\bigwedge\ggg_{-1}$,  it is easily shown that for any given nonzero submodule $M$ of $Z_\chi(\lambda)$, $M\cap Z_\chi(\lambda)_{[N]}\ne 0$. So $M$ contains $\tsy\otimes v_\lambda$, where $v_\lambda$ is a generator $1\otimes 1\in 1\otimes \bk_\lambda$ of $Z^0_\chi(\lambda)$.
In order to show the proposition, it suffices for us to prove that $M$ contains the canonical generator $1\otimes v_\lambda$ of $Z_\chi(\lambda)$.

   (3) Keep the notations in \S\ref{sec: root system of p}. Similarly to the arguments in \cite{Ser}, we make a total order $\prec$ in  $\mathcal{C}$: for any different two roots $\alpha_k\in\{\epsilon_{i_k}\pm \epsilon_{j_k}\}$ with $1\leq i_k<j_k\leq n$, $k=1,2$, we define $\alpha_1\succ \alpha_2$ if $i_1>i_2$, or $i_1=i_2$ but $\alpha_2-\alpha_1\in\Phi^+$. Then we have
   $$\mathcal{C}=\{\beta_1\succ\beta_2\succ\cdots\succ\beta_\ell\}$$
   with $\ell:=\#\mathcal{C}$.
   Note that we already have $\tsy\otimes v_\lambda\in M$, and $M$ is a nonzero submodule of $Z_\chi(\lambda)$. So we have $\prod_{\beta\in \Phi_0^+\backslash \Pi_0} X_{-\beta}^{p-1} \tsy\otimes v_\lambda\in M$. Furthermore, by the property of Lie brackets in $\ggg$, we can change the order of factors for $\prod_{\beta\in \Phi_0^+\backslash \Pi_0} X_{-\beta}^{p-1} \tsy$, having  $\tsyy\otimes v_\lambda\in M$ for
$$\tsyy=X_{-{\beta_\ell}}^{\pi(\beta_\ell)}\cdots X_{-{\beta_1}}^{\pi(\beta_1)}$$
where $\pi(\beta)=p-1$ if $-\beta\in \Phi^-_0$, and $1$ if $-\beta\in \Phi_{-1}$.
    Set $\tsyy_k:=X_{-\beta_\ell}^{\pi(\beta_\ell)}\cdots X_{-\beta_k}^{\pi(\beta_k)}$ for $k=1,\ldots,\ell$. Naturally, $\tsyy_1=\tsyy$ and $\tsyy_\ell=X_{-\beta_\ell}^{\pi(\beta_\ell)}$.

   (4) Set
   \begin{align*}
   Z_k=\begin{cases}X_{-(\epsilon_i-\epsilon_{i+1})}^{p-1}
   X_{\epsilon_{i+1}+\epsilon_j}
   &\text{ if } \beta_k=(\epsilon_i+\epsilon_j)\in \Phi_{1}\cr
   X_{-(\epsilon_i-\epsilon_{i+1})}X^{p-1}_{\epsilon_{i+1}-\epsilon_j} &\text{ if }\beta_k=\epsilon_i-\epsilon_j\in \Phi_0^+\backslash\Pi_0
   \end{cases} \text{ for }i<j.
   \end{align*}
   Note that for any $\alpha\in \Pi_0$, $X^p_{-\alpha}=1$ in $U_\chi(\ggg)$, and $v_\lambda$ is annihilated by any root vectors of positive roots. We have that
   \begin{align}\label{eq: comput-1}
   Z_k X_{-\beta_k}^{\pi(\beta_k)}\otimes v_\lambda=1\otimes v_\lambda.
   \end{align}
   With aid of the forthcoming Lemma \ref{lem: 0.2},  we inductively have
   \begin{align*}
   Z_\ell\cdots Z_1 (\tsyy_1\otimes v_\lambda)&=Z_\ell\cdots Z_1(\tsyy_2 X_{-\beta_1}^{\pi(\beta_1)}\otimes v_\lambda)\cr
   &{\overset{Lemma \ref{lem: 0.2}(3)}{=}}Z_\ell\cdots Z_{2}\tsyy_2 (Z_1 X_{-\beta_1}^{\pi(\beta_1)}\otimes v_\lambda)\cr
   &{\overset{(\ref{eq: comput-1})}=}Z_\ell\cdots Z_{2}(\tsyy_2 \otimes v_\lambda)\cr
   &\cdots\cdots\cr
   &=Z_\ell(\tsyy_\ell\otimes v_\lambda)\cr
   &=1\otimes v_\lambda.
    \end{align*}
Hence $1\otimes v_\lambda\in M$, consequently $M=Z_\chi(\lambda)$.

Thus $Z_\chi(\lambda)$ is irreducible. The proof is completed.
   \end{proof}

\begin{lemma}\label{lem: 0.2} Keep the notations and assumptions as above. The following statements hold.
\begin{itemize}
		\item[(1)] Suppose $\alpha\in\{\epsilon_i\pm\epsilon_j\}, \beta\in\{\epsilon_s\pm\epsilon_t\}$ with $i<j$ and $s<t$ are two roots in $\mathcal{C}$ with $\alpha\succ\beta$. Then  $\beta-\alpha$ is either a positive root, or not any root at all. If $i=s$
(we say that $\alpha$ and $\beta$ have the same type $i=s$),
then $[X_{-\alpha},X_{-\beta}]=0$.
		\item[(2)] Suppose $\alpha\in \mathcal{C}$ with $\alpha\in \{\epsilon_i\pm\epsilon_j\}$. Then $\alpha=\alpha'+\alpha''$ for $\alpha'=\epsilon_i-\epsilon_{i+1}\in \Pi_0$ and $\alpha''\in \{\epsilon_{i+1}\pm\epsilon_j\}\subset\Phi^+$, satisfying
		$$[X_{\alpha''},\prod_{\beta\in \mathcal{C},\beta\prec\alpha}X^{\pi(\beta)}_{-\beta}]=0$$
and
		$$[X_{-\alpha'},\prod_{\beta\in \mathcal{C},\beta\prec\alpha}X^{\pi(\beta)}_{-\beta}]=0.$$

\item[(3)] $Z_k \tsyy_{k+1}=\tsyy_{k+1} Z_k$ for $k=1,\ldots, \ell-1$.
\end{itemize}
	\end{lemma}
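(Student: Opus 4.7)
The plan is to prove (1), (2), and (3) in sequence, deducing (3) easily from (2).

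For (1): Both $\alpha,\beta\in\mathcal{C}\subset\Phi^+$, and by assumption $\alpha\succ\beta$. I would argue by direct inspection of indices: write $\alpha=\epsilon_i+\varepsilon_1\epsilon_j$, $\beta=\epsilon_s+\varepsilon_2\epsilon_t$ with $i<j$, $s<t$, $\varepsilon_{1,2}\in\{\pm\}$, and split into the cases $i>s$ and $i=s$. The second subcase is built into the definition of $\succ$: $\beta-\alpha\in\Phi^+$ directly. In the first case $i>s$, expanding $\beta-\alpha=(\epsilon_s-\epsilon_i)+\varepsilon_2\epsilon_t-\varepsilon_1\epsilon_j$ and enumerating which of the four $\epsilon$-coefficients can cancel, one checks that the result is either not a root at all (because a coefficient $\pm 2$ survives), or a positive root (since the two surviving indices satisfy the inequality forcing membership in $\Phi^+$). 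For the commutator statement when $i=s$, the weight of $[X_{-\alpha},X_{-\beta}]$ is $-\alpha-\beta=-2\epsilon_i-\varepsilon_1\epsilon_j-\varepsilon_2\epsilon_t$, which carries coefficient $-2$ on $\epsilon_i$; since $\Phi_0$ contains no element with a $\pm 2$-coefficient and $\Phi_{-1}=\{-\epsilon_a-\epsilon_b:a<b\}$ disallows $a=b$, no such weight is a root, so $[X_{-\alpha},X_{-\beta}]=0$.

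For (2): Write $\alpha=\epsilon_i+\varepsilon\epsilon_j\in\mathcal{C}$ (where $\varepsilon=-$ forces $j>i+1$ since $\alpha\notin\Pi_0$); set $\alpha'=\epsilon_i-\epsilon_{i+1}\in\Pi_0$ and $\alpha''=\epsilon_{i+1}+\varepsilon\epsilon_j$, so $\alpha'+\alpha''=\alpha$ and $\alpha''\in\Phi^+$ (it is in $\Phi_0^+$ when $\varepsilon=-$, and in $\Phi_1$ when $\varepsilon=+$, possibly coinciding with $2\epsilon_{i+1}\in\Upsilon$ if $j=i+1$). To prove the two commutator identities, I would apply the (super-)Leibniz rule to the product and show that every individually nonzero contribution $[X_{\alpha''},X_{-\beta}]$ and $[X_{-\alpha'},X_{-\beta}]$ produces a single root vector $X_{-\delta}$ with $\delta\in\mathcal{C}$, $\delta\prec\alpha$, and $\delta$ coinciding with some index $\gamma$ already appearing in the product. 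Sliding this created $X_{-\delta}$ into position (using (1) to commute it through the intervening factors, whose weights have different first index in the relevant situations) yields $X_{-\delta}^{\pi(\delta)+1}$, which vanishes in $U_\chi(\ggg)$ because $\pi(\delta)+1=p$ gives $X_{-\delta}^p=0$ for $\delta\in\Phi_0^+\setminus\Pi_0$ (as $\chi(X_{-\delta})=0$ by (\ref{eq: reg nilp})) and $\pi(\delta)+1=2$ gives $X_{-\delta}^2=0$ for $\delta\in\Phi_1\cap\mathcal{C}$ (since $[\ggg_{-1},\ggg_{-1}]=0$).

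Part (3) is then a direct consequence of (2) applied to $\alpha=\beta_k$: by construction $\tsyy_{k+1}=\prod_{\beta\in\mathcal{C},\,\beta\prec\beta_k}X_{-\beta}^{\pi(\beta)}$, and $Z_k$ is a monomial in $X_{-\alpha'}$ and $X_{\alpha''}$ with $\alpha',\alpha''$ attached to $\beta_k$ as in (2); since $X_{-\alpha'}$ and $X_{\alpha''}$ each commute with $\tsyy_{k+1}$ by (2), so does any monomial in them, whence $Z_k\tsyy_{k+1}=\tsyy_{k+1}Z_k$. The main obstacle lies in part (2): one must verify by explicit case analysis on the form of $\beta\in\mathcal{C}$ with $\beta\prec\alpha$ that each nonzero commutator creates exactly the right ``collision'' with an already-present factor, and that the repositioning of the created vector through intermediate factors never encounters an obstruction. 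The choice of the total order $\succ$ on $\mathcal{C}$ is precisely engineered for this cancellation, and (1) serves as the indispensable rearrangement tool.
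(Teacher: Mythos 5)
Your plan for part (2) contains an unverified assertion on which the whole argument hinges, and it is in fact false. You claim that every nonzero contribution $[X_{\alpha''},X_{-\beta}]$ (for $\beta\prec\alpha$ in $\mathcal{C}$) is a multiple of $X_{-\delta}$ with $\delta\in\mathcal{C}$, $\delta\prec\alpha$, so that $X_{-\delta}$ collides with a factor already present in $\prod_{\gamma\prec\alpha}X_{-\gamma}^{\pi(\gamma)}$. The paper's own proof only asserts that $\delta=\beta-\alpha''$ has the same first index (``type'') as $\beta$ — it never claims $\delta\prec\alpha$ — and for good reason: take $n=3$ and $\alpha=\epsilon_1+\epsilon_3$, so $\alpha'=\epsilon_1-\epsilon_2$, $\alpha''=\epsilon_2+\epsilon_3$. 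The only root in $\mathcal{C}$ with $\beta\prec\alpha$ is $\beta=\epsilon_1+\epsilon_2$, and $[X_{\alpha''},X_{-\beta}]$ has weight $\alpha''-\beta=\epsilon_3-\epsilon_1$, i.e.\ it is a (nonzero, as one checks directly on the matrix realization) multiple of $X_{-\delta}$ with $\delta=\epsilon_1-\epsilon_3$. But $\alpha-\delta=2\epsilon_3\in\Phi_1\subset\Phi^+$, so $\delta\succ\alpha$: it does \emph{not} appear in the product, and no power of $X_{-\delta}$ can be produced to kill it. Your proposed mechanism therefore breaks down on this case, and the case analysis you defer to the reader would not close.

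Two smaller issues. First, when you justify sliding the created $X_{-\delta}$ past intervening factors, you write that those factors ``have different first index in the relevant situations''; this is backwards — Part (1) gives commutation precisely for \emph{equal} first index, and indeed when $\beta\prec\gamma'\prec\delta$ with $\beta,\delta$ of the same first index, total ordering forces $\gamma'$ to share that first index, which is what one actually uses. Second, Part (1) as you argue it is fine and agrees with the paper's one-line reasoning, and Part (3) does follow formally from Part (2) as you say; the gap lives entirely in your (and, as far as one can tell, the paper's) treatment of Part (2), where a genuine case analysis — or a correction to the statement — is required to handle contributions with $\delta\succ\alpha$.
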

	\begin{proof}
		(1) By the definition of order $\succ$ and the fact that $-2\epsilon_i$ is not any root, Part (1) follows.

		(2) 
Obviously, we have $\alpha\prec\alpha'\prec\alpha''$. 		Suppose $\beta-\alpha''$ is a root  for some $\beta\prec\alpha$. Then $\beta-\alpha''$ is a root in $\mathcal{C}$ since $\beta-\alpha''-\alpha'=\beta-\alpha$. Note that $\beta-\alpha''$ and $\beta$ have the same type. By simple calculation along with Part (1), we have
		$$[X_{\alpha''},\prod_{\beta\in \mathcal{C},\beta\prec\alpha}X^{\pi(\beta)}_{-\beta}]=\displaystyle
\sum_{\beta\prec\alpha}\displaystyle\sum_{1\leq j\leq \pi(\beta)}((\prod_{\gamma\prec\beta}X^{\pi(\gamma)}_{-\gamma})
X_{-\beta}^{j-1}[X_{\alpha''},X_{-\beta}]X_{-\beta}^{\pi(\beta)-j}
(\prod_{\gamma'\succ\beta}X_{-\gamma'}^{\pi(\gamma')}))=0$$
		because $X_{-\beta}^{\pi(\beta)+1}=0$ for any $\beta\in \mathcal{C}$.
		
		Similarly, suppose $\beta+\alpha'$ is a root for some $\beta\prec\alpha$. We have $\beta+\alpha'$ is a root in $\mathcal{C}$ has the same type with $\beta$ and  $\beta+\alpha'\prec\beta$. We also have
     	$$[X_{-\alpha'},\prod_{\beta\in
     \mathcal{C},\beta\prec\alpha}X^{\pi(\beta)}_{-\beta}]=
     \displaystyle\sum_{\beta\prec\alpha}\displaystyle\sum_{1\leq j\leq \pi(\beta)}((\prod_{\gamma\prec\beta}X^{\pi(\gamma)}_{-\gamma})
     X_{-\beta}^{j-1}[X_{-\alpha'},X_{-\beta}]X_{-\beta}^{\pi(\beta)-j}
     (\prod_{\gamma'\succ\beta}X_{-\gamma'}^{\pi(\gamma')}))=0.$$	

     (3) Note that for $\beta_k\in \mathcal{C}$ with $\beta_k\in\{\epsilon_i\pm\epsilon_j\}$, we have
     $\beta_k=\beta'+\beta''$ for $\beta'=\epsilon_i-\epsilon_{i+1}$ and $\beta''\in \{\epsilon_{i+1}\pm\epsilon_j\}$. Then Part (3) directly follows (2).
     		\end{proof}

\section{Recall of modular representations of queer Lie superalgebras}

Wang and Zhao initiated the study of  modular representation theory of Lie superalgebras  work \cite{WZ1} by formulating a general superalgebra analogue of the Kac-Weisfeiler (KW) conjecture and
establishing it for the basic classical Lie superalgebras. Furthermore, they studied modular representations of $Q(n)$ quite thoroughly.

\subsection{Definition} Recall that for the queer Lie superalgebra $\ggg =\qqq(n)$, it consists of matrices of the form
\begin{align}\label{eq: defn q}
\left( \begin{array}{cc} A  & B
\cr B & A\end{array}\right)\in\gl(n|n)
\end{align}
where $A$ and $B$ are arbitrary $n \times n$ matrices. Its even part $\ggg_\bz$ is consequently isomorphic to $\gl(n)$. The odd part $\ggg_\bo$ of $\qqq(n)$
is another isomorphic copy of $\gl(n)$ under the adjoint action of $\ggg_\bz$. Hence, the queer Lie superalgebra $\qqq(n)$ can be truly thought of super-analogue of the general linear Lie algebra. Recall that modular representations for type $Q$,  there are some remarkable works (see \cite{B06}, \cite{BKl} for supegroups and \cite{WZ2} Lie superalgeras).

\subsection{Standard Cartan subalgebras and roots}\label{sec: st csa and roots}
Let $\ggg=\qqq(n)$ from now on till the end of  this section.
Recall that  all Cartan subalgebras of $\ggg$ are conjugate to the standard Cartan subalgebra $\hhh =\hhh_\bz\oplus \hhh_\bo$ which consists of matrices (\ref{eq: defn q}) with both $A$
and $B$ diagonal. Such a Cartan subalgebra $\hhh$ will be called  the standard Cartan subalgebra. All Borel Lie subalgebras  are conjugate to the standard Borel subalgebra which  consists of matrices (\ref{eq: defn q}) with both $A$ and $B$ upper triangular. The roots
of $\ggg$ coincide with those of $\gl(n)$, which are by definition,  elements $\alpha \in \hhh^*_\bz$ satisfying that $\ggg_\alpha:= \{x \in\ggg\mid [H, x] = \alpha(H)x, \forall H\in\hhh_\bz\}$ is nonzero. More precisely,  let $\{\epsilon_i\}$ be a
basis of $\hhh^*_\bz$  dual to the standard basis $\{J_i\}$ of $\hhh_\bz$, where $J_i$ is of the form (\ref{eq: defn q}) with the $i$th diagonal entry of $A$ being 1 and $0$ elsewhere, then the roots are
$\Phi = \{\epsilon_i - \epsilon_j\mid 1 \leq i\ne j\leq n\}$. The toral element corresponding to $\epsilon_i-\epsilon_j$ is still denoted by $H_{\epsilon_i-\epsilon_j}$, which is actually equal to the matrix of form (\ref{eq: defn q}) with $B=0$ and $A=E_{ii}-E_{jj}$ (here the notations as in the proof of Proposition \ref{prop: reg ss}).
The super dimension of each root space is equal to $(1|1)$, in contrast to the $\gl(n)$ case.

For the fixed Borel subalgebra $\bbb$, we can define a system of positive roots which will be denoted by $\Phi^+$; the corresponding simple system is denoted by $\Pi$. Also let $\nnn^+ = \nnn^+_\bz\oplus \nnn^+_\bo$ (respectively $\nnn^-$) denote the Lie subalgebra of positive (respectively negative) root vectors.

\subsection{}\label{sec: conv on chi} Note that $\ggg_\bz = \gl(n)$ and then any $p$-character $\chi$ is $\text{GL}(n)$-conjugate to a $p$-character $\theta$ with $\theta(\nnn^+) =0$ and $U_\chi(\ggg)\cong
U_\theta (\ggg)$. So we can only need to consider $p$-characters $\chi$ with
\begin{align}\label{eq: ann n ev +}
\chi(\nnn_\bz^+) = 0
 \end{align}
 without loss of generality when considering  the question of maximal irreducible dimensions.  As in the case of periplectic $\ppp(n)$, a $p$-character $\chi\in\ggg_\bz^*$ is called semisimple or nilpotent if it is $\text{GL}(n)$-conjugate some $\theta\ggg_\bz^*$ with $\theta(\nnn^+_\bz\oplus\nnn^+_\bz)=0$, and $\chi$ is called nilpotent if it is $\text{GL}(n)$-conjugate to some $\theta\in \ggg_\bz^*$ with $\theta(\nnn^+_\bz\oplus\hhh_\bz)=0$.

\subsection{Queer baby Verma modules}\label{sec: strange baby}
In this subsection, we recall baby Verma modules introduced by Wang and Zhao in \cite{WZ2} which we will call queer baby Verma modules.

Fix a triangular decomposition $\ggg=\nnn^-\oplus \hhh\oplus \nnn^+$
and let $\bbb=\hhh\oplus \nnn^+$. We will only consider $p$-characters $\chi\in\ggg^*_\bz$ with $\chi(\nnn^+_\bz)=0$.

For $\lambda\in \hhh^*_\bz$ we may consider the symmetric bilinear form on $\hhh_\bo$ defined by
$\tsf_\lambda(Z_1|Z_2) := \lambda([Z_1,Z_2])$  $Z_1,Z_2\in \hhh_\bz$.
Now if $\hhh^\lambda_\bo$ is  a maximal isotropic subspace with respect to this bilinear form $\tsf$ and we let $\hat\hhh^\lambda_\bo$ be a complement of $\hhh^\lambda_\bo$ in $\hhh_\bo$ (i.e.  $\hhh_\bo=\hhh^\lambda_\bo\oplus \hat\hhh^\lambda_\bo$), we may extend  $\lambda$ to a one-dimensional representation $\bk_\lambda$ of $\hhh_\bz\oplus \hhh^\lambda_\bo$ by letting $\hhh^\lambda_\bo$ act trivially. Let $\chi\in \ggg_\bz^*$ be such that $\chi(\nnn^+)=0$. Set
$$\Lambda(\chi)=\{\lambda\in \hhh^*_\bz\mid
\lambda(H)^p - \lambda(H^{[p]}) = \chi(H)^p\;\; \forall H\in \hhh_\bz\}$$
which is equal to $\{(\lambda_1,\ldots, \lambda_n)\mid \lambda_i^p-\lambda_i= \chi(J_i)^p, 1\leq i \leq n\}$ for
where $\lambda_i = \lambda(J_i)$. The module $\bk_\lambda$ is a $U_\chi(\hhh_\bz\oplus \hhh_\bo')$-module if and only if $\lambda\in \Lambda_\chi$. Define the following $U_\chi(\hhh)$-module
$$V_\chi(\lambda) = U_\chi(\hhh)\otimes_{U_\chi(\hhh_\bz\oplus \hhh^\lambda_\bo)}\bk_\lambda$$
for $\lambda\in \Lambda(\chi)$. Then $V_\chi(\lambda)$ is irreducible over $U_\chi(\hhh)$, with dimension $2^{\dim\hat\hhh^\lambda_\bo}$.
This irreducible $U_\chi(\hhh)$-module can be extended an irreducible $U_\chi(\bbb)$-module by letting $\nnn^+$ act trivially. By continuously inducing, we define the queer baby Verma module of $U_\chi(\ggg)$ associated with $\lambda\in \hhh_\bz^*$:
\begin{align}\label{eq: q baby}
Z^\qqq_\chi(\lambda)= U_\chi(\ggg)\otimes_{U_\chi(\bbb)} V_\chi(\lambda).
\end{align}
Set  $v_0= 1\otimes 1\in Z_\chi(\lambda)$. We have, as a vector space, $Z^\qqq_\chi(\lambda)=U_\chi(\nnn^-)\otimes V_\chi(\lambda)$

\subsection{A criterion  of irreducible baby Verma modules} Keep the notation $\hhh$ of  the standard Cartan subalgebra with basis
$\{J_i\in\hhh_\bz,\; J'_i\in\hhh_\bo\mid 1\leq i\leq n\}$.  For $\lambda:= (\lambda_1,\ldots,\lambda_n)\in \Lambda_\chi$  with  $\lambda_i = \lambda(J_i)$, put
$$\Phi(\lambda):=\prod_{1\leq i<j\leq n}\phi(\lambda_i,\lambda_j)$$
where the function $\psi$ on two parameters is defined via $\phi(x,y) = (x+ y)(x-y-1)(x - y- 2) \cdots(x- y -(p -1))$.

 \begin{theorem} \cite[Theorem 3.4]{WZ2}\label{thm: wz2 thm q}
  Let $\chi\in \ggg_\bz^*$ be semisimple with $\chi(\nnn^+_\bz) =\chi(\nnn^-_\bz) = 0$ and take $\lambda\in \Lambda_\chi$.
Then the queer baby Verma module $Z^\qqq_\chi(\lambda)$ is irreducible if and only if $\Phi(\lambda)\ne 0$.
\end{theorem}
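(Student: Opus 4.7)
The plan is a Shapovalov-style singular vector analysis. Any nonzero $U_\chi(\ggg)$-submodule $M$ of $Z^\qqq_\chi(\lambda)$ contains a nonzero weight vector of maximal height (with respect to $\hhh_\bz$), and such a vector is necessarily annihilated by $\nnn^+ = \nnn^+_\bz \oplus \nnn^+_\bo$. If every such singular vector must lie in the top weight space $1 \otimes V_\chi(\lambda)$, then $M \cap (1 \otimes V_\chi(\lambda))$ is a nonzero $U_\chi(\hhh)$-submodule of $V_\chi(\lambda)$, hence equals $V_\chi(\lambda)$ by irreducibility of the latter over $U_\chi(\hhh)$; applying $U_\chi(\nnn^-)$ then recovers $M = Z^\qqq_\chi(\lambda)$. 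Thus irreducibility of $Z^\qqq_\chi(\lambda)$ is equivalent to the statement that every singular vector lies in the top weight space, and the task reduces to matching this condition with $\Phi(\lambda) \ne 0$.

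For the sufficiency direction, I would proceed by induction on PBW height. Using even and odd negative root vectors $F_{ji}, F'_{ji}$ for $i<j$, one obtains a PBW basis of $Z^\qqq_\chi(\lambda)$ over a basis of $V_\chi(\lambda)$, into which any putative singular vector $w \notin 1\otimes V_\chi(\lambda)$ can be expanded. Two Kostant-type identities drive the analysis. On the even side, modulo $U\nnn^+_\bz$,
\[
[e_{ij}, F_{ji}^k] \equiv k\, F_{ji}^{k-1}(H_{ij}-k+1),
\]
which upon evaluating at weight $\lambda$ produces the scalar $(\lambda_i-\lambda_j-k+1)$ and, iterating over $k=1,\ldots,p-1$, the polynomial factors of $\phi(\lambda_i,\lambda_j)$. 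On the odd side, a direct matrix computation in $\qqq(n)$ yields the anticommutator $\{e'_{ij}, F'_{ji}\} = J_i+J_j \in \hhh_\bz$, so commuting $e'_{ij}$ past $F'_{ji}$ produces the remaining linear factor $(\lambda_i+\lambda_j)$. Peeling off one negative root vector at a time from the leading monomial of $w$, the assumption $\Phi(\lambda)\ne 0$ forces the successive coefficients to vanish, driving $w$ into $1\otimes V_\chi(\lambda)$ and giving the required contradiction.

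The necessity direction is by explicit construction. If $\lambda_i-\lambda_j=k$ for some $k\in\{1,\ldots,p-1\}$, then $F_{ji}^k\otimes v$ (for any $v\in V_\chi(\lambda)$) is singular: the even Kostant identity gives $e_{ij}F_{ji}^k\otimes v = k(\lambda_i-\lambda_j-k+1)F_{ji}^{k-1}\otimes v = 0$, and the other positive root vectors annihilate $F_{ji}^k\otimes v$ by standard $\gl(n)$ root-bracket arguments together with the fact that $\nnn^+$ acts trivially on $V_\chi(\lambda)$. If instead $\lambda_i+\lambda_j=0$, the odd element $F'_{ji}\otimes v$ (for suitable $v$) is annihilated by $e'_{ij}$ via the anticommutator identity, and by other positive root vectors for similar reasons. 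In either case, the vector has weight strictly below $\lambda$, and the submodule it generates is proper.

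The main obstacle I anticipate is the Clifford-algebra-like structure of $V_\chi(\lambda)$ as an $\hhh$-module. In the reductive case the top weight space is one-dimensional and Shapovalov-style elimination reduces to scalar relations, but here $V_\chi(\lambda)$ has dimension $2^{\dim\hat\hhh^\lambda_\bo}$, so singular vectors in the top weight space form a nontrivial $U_\chi(\hhh)$-submodule, and the inductive elimination must track both the PBW monomial and the particular vector in $V_\chi(\lambda)$ that it multiplies. Handling this requires choosing bases of $V_\chi(\lambda)$ adapted to the maximal isotropic $\hhh^\lambda_\bo$, and carefully checking that the Kostant-type identities interact cleanly with the Clifford action, particularly because odd positive root vectors do not commute past the odd part of the Cartan action on $V_\chi(\lambda)$.
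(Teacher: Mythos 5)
This theorem is not proved in the paper --- it is quoted verbatim from Wang--Zhao \cite[Theorem~3.4]{WZ2}, so there is no ``paper's own proof'' to compare against. What you have written is a blind reconstruction of a proof that lives in the cited reference.

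Your overall strategy (a singular-vector / Shapovalov-type argument: any nonzero submodule meets the socle of $\nnn^+$-invariants, and $\Phi(\lambda)\ne 0$ should force all singular vectors into the top graded piece $1\otimes V_\chi(\lambda)$, whose irreducibility over $U_\chi(\hhh)$ then finishes the job) is the standard and correct template for this kind of statement, and the two algebraic inputs you isolate --- the even Kostant identity $[e_{ij},F_{ji}^k]\equiv kF_{ji}^{k-1}(H_{ij}-k+1)$ and the odd anticommutator $\{e'_{ij},F'_{ji}\}=J_i+J_j$ --- are indeed the right local computations that generate the linear factors of $\phi(\lambda_i,\lambda_j)$. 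That much matches what one expects the cited proof to use.

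However, there are two genuine gaps. First, as you yourself flag, the top weight space $V_\chi(\lambda)$ is a $2^{\dim\hat\hhh^\lambda_\bo}$-dimensional Clifford module rather than a line, and the odd Cartan $\hhh_\bo$ does not commute with the odd root vectors. This means ``peeling off one negative root vector at a time from the leading monomial'' is not a scalar computation: at each step the coefficient is a Clifford operator on $V_\chi(\lambda)$, not a number, and you must argue that it is \emph{invertible} (not merely nonzero) under $\Phi(\lambda)\ne 0$. The sketch does not resolve this, and it is precisely the point at which the queer case diverges from the basic classical case; leaving it as a flagged ``obstacle'' means the sufficiency direction is not established. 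Second, there is an index discrepancy worth noting: your even Kostant identity, applied at $k=1,\dots,p-1$, produces the set of linear conditions $\lambda_i-\lambda_j\in\{0,1,\dots,p-2\}$, whereas the vanishing locus of the $(x-y-1)\cdots(x-y-(p-1))$ part of $\phi$ is $\lambda_i-\lambda_j\in\{1,\dots,p-1\}$. In $\bbf_p$ these sets differ (one contains $0$ and not $-1$; the other contains $-1$ and not $0$), so the bookkeeping of which power of $F_{ji}$ gives which singular vector needs to be redone --- most likely the correct range is $k=1,\dots,p$ with $F_{ji}^p=0$ supplying the missing top relation, or the singular vector candidate should be $F_{ji}^{k}\otimes v$ with the condition $\lambda_i-\lambda_j=k$ rather than $k-1$, but as written the necessity construction does not align with $\Phi(\lambda)=0$.

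Since the statement is a cited external result, the cleanest disposition in the context of this paper is simply to cite \cite{WZ2}; if you want to supply a proof, the two points above must be addressed, the first being the substantive one.
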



\section{Proof of  the first super Kac-Weisfeiler conjecture for strange classical Lie superlagebras}

\subsection{General observation on isotropic spaces associated with $\theta\in \ggg_\bz^*$ for any $\ggg=\ggg_\bz\oplus\ggg_\bo$}\label{sec: gen notions for conj}
Let $\ggg=\ggg_\bz\oplus\ggg_\bo$ be any given finite-dimensional restricted Lie superalgebra over $\bk$. For a given $\theta\in \ggg_\bz^*$, consider the bilinear form $\tsb_\theta$ on $\ggg$ with regarding $\theta\in \ggg^*$ by trivial extension
$$\tsb_\theta: \ggg\times \ggg\rightarrow \bk, (X,Y)\mapsto \theta([X,Y]).$$

\subsection{} Let $\ggg^\theta:=\{X\in \ggg\mid \theta([X,\ggg])=0\}$. Then one can define bilinear forms arising from $(-,-)$ on the spaces $\tilde\ggg:=\ggg\slash \ggg^\theta$, $\tilde\ggg_\bz:=\ggg_\bz\slash \ggg_\bz^\theta$ and $\tilde\ggg_\bo:=\ggg_\bo\slash \ggg_\bo^\theta$ respectively. By abuse of notations, those bilinear forms are still denoted by $\tsb_\theta$.

\begin{lemma}(\cite{Shu}) \label{lem: non-deg bil forms}
The following statements hold.
\begin{itemize}\label{lem: 2.1}
\item[(1)] The centralizer $\ggg^\theta=\ggg^\theta_\bz+\ggg^\theta_\bo$ is a restricted subalgebra of $\ggg$.
\item[(2)] The  $\tsb_\theta$ on $\tilde \ggg_\bz$ is a  non-degenerate skew-symmetric bilinear form. And $\tsb_\theta$ on $\tilde\ggg_\bo$ is a non-degenerate symmetric bilinear form.
 Consequently, $\dim (\ggg_\bz-\ggg_\bz^\theta)$ is even.

\item[(3)] The maximal isotropic space with respect to $\tsb_\theta$ in $\ggg_\bz$ has dimension $\frac{\dim\ggg_\bz+\dim\ggg^\theta}{2}$.

 \item[(4)] The maximal isotropic space with respect to $\tsb_\theta$  in $\ggg_\bo$ has dimension $\frac{\dim\ggg_\bo+\dim\ggg^\theta_\bo}{2}$ if $\dim\ggg_\bo-\dim\ggg^\theta_\bo$ is even, and has dimension $\frac{\dim\ggg_\bo+\dim\ggg^\theta_\bo-1}{2}$ if  $\dim\ggg_\bo-\dim\ggg^\theta_\bo$ is odd.
\end{itemize}
\end{lemma}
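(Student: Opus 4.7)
The plan is to treat the four parts in order. Parts (1) and (2) reduce to formal manipulations using the definition of $\ggg^\theta$, the super Jacobi identity, and the restricted identity $\ad(X^{[p]}) = (\ad X)^p$; parts (3) and (4) reduce to the standard classification of Lagrangian and maximal isotropic subspaces of non-degenerate symplectic and orthogonal forms over the algebraically closed field $\bk$ of characteristic $p>2$.

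For (1), I would first check closure under the super bracket: for $X, Y \in \ggg^\theta$ and $Z \in \ggg$, the super Jacobi identity gives
$$
\theta([[X,Y],Z]) = \theta([X,[Y,Z]]) - (-1)^{|X||Y|}\theta([Y,[X,Z]]) = 0,
$$
so $[X,Y] \in \ggg^\theta$. For the restricted structure, I would use that $\ad(X^{[p]}) = (\ad X)^p$ for $X \in \ggg_\bz$: if $X \in \ggg^\theta_\bz$, then for any $Z \in \ggg$,
$$
\theta([X^{[p]}, Z]) = \theta\bigl((\ad X)^{p}(Z)\bigr) = \theta\bigl([X, (\ad X)^{p-1}(Z)]\bigr) = 0,
$$
hence $X^{[p]} \in \ggg^\theta_\bz$, showing $\ggg^\theta$ is a restricted sub-superalgebra.

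For (2), skew-symmetry of $\tsb_\theta$ on $\tilde\ggg_\bz$ is immediate from $[X,Y] = -[Y,X]$ when $X, Y$ are even, while symmetry on $\tilde\ggg_\bo$ follows from the super-convention that the bracket of two odd elements is symmetric. Non-degeneracy on the two quotients is tautological because $\ggg^\theta_\bz$ and $\ggg^\theta_\bo$ are, by definition, the radicals of $\tsb_\theta$ in the even and odd components. A non-degenerate skew-symmetric form exists only in even dimension, forcing $\dim\ggg_\bz - \dim\ggg^\theta_\bz$ to be even.

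For (3) and (4), the maximal isotropic subspaces of $(\ggg_\bz,\tsb_\theta)$ and $(\ggg_\bo,\tsb_\theta)$ are precisely the preimages under the quotient map of Lagrangian (resp.\ maximal isotropic) subspaces of $\tilde\ggg_\bz$ (resp.\ $\tilde\ggg_\bo$); indeed any maximal isotropic must contain the full radical, since otherwise one could enlarge it by any radical vector. In the symplectic case a Lagrangian has dimension $\tfrac12\dim\tilde\ggg_\bz$, and adding $\dim\ggg^\theta_\bz$ gives $\tfrac12(\dim\ggg_\bz+\dim\ggg^\theta_\bz)$. In the orthogonal case, the Witt index of a non-degenerate symmetric form of dimension $d := \dim\ggg_\bo - \dim\ggg^\theta_\bo$ over an algebraically closed field with $p>2$ equals $\lfloor d/2\rfloor$, which yields the two cases in (4) after adding $\dim\ggg^\theta_\bo$. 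I do not foresee any serious obstacle; the only point requiring a moment's care is the absorption of the radical by a maximal isotropic, which is standard linear algebra.
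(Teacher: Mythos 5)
Your proof is correct. Since the paper merely cites this lemma from \cite{Shu} and does not reproduce an argument, there is no in-paper proof to compare against, but your argument is the natural and essentially unique one: closure of $\ggg^\theta$ under bracket via the super Jacobi identity and under $[p]$ via $\ad(X^{[p]})=(\ad X)^p$; non-degeneracy on $\tilde\ggg_\bz$ and $\tilde\ggg_\bo$ by definition of radical; and then the dimension counts of Lagrangian (symplectic) and maximal isotropic (orthogonal, Witt index $\lfloor d/2\rfloor$ over an algebraically closed field of odd characteristic) subspaces, corrected for the radical that every maximal isotropic subspace must absorb. Two small points worth making explicit: that $\ggg^\theta$ is itself $\bbz_2$-graded (this follows immediately from $\theta\in\ggg_\bz^*$ vanishing on $\ggg_\bo$, so the conditions on the even and odd components of an element decouple), and that the statement of part (3) should read $\dim\ggg^\theta_\bz$ rather than $\dim\ggg^\theta$ — your version silently and correctly makes this repair.
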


 In the following,  let $\lceil a\rceil$ denote the  greatest  integer lower bound of $a$ for an rational number $a\in\bbq$, and $\lfloor a\rfloor$ denote the  least integer upper bound of $a$. Then Lemma \ref{lem: 2.1}(4) becomes that the maximal isotropic space with respect to $\tsb_\theta$  in $\ggg_\bo$ has dimension $\lceil\frac{\dim\ggg_\bo+\dim\ggg^\theta_\bo}{2}\rceil$. Set $d(\ggg,\theta)=(\frac{\dim\ggg_\bz+\dim\ggg^\theta_\bz}{2}| \lceil\frac{\dim\ggg_\bo+\dim\ggg^\theta_\bo}{2}\rceil)$. The $d(\ggg,\theta)$ is the maximal super-dimension of the isotropy subspaces of $\ggg$ with respect to $(-,-)$.

\subsection{}\label{sec: skw conj} For the simplicity of expression, a pair of non-negative integer $(a|b)$ is said to be a super-datum. Call $a$ and $b$ its even entry and odd entry, respectively.   For a given $\theta\in\ggg^*_\bz$, denote the super datum $i(\ggg,\theta)=(\frac{\dim\ggg_\bo-\dim\zzz^\theta_\bo}{2}| \lfloor\frac{\dim\ggg_\bo-\dim\zzz^\theta_\bo}{2}\rfloor)$ by
$(b^\theta_0, b^\theta_1)$, this is to say, the even and odd entries are respectively,
\begin{align}\label{eq: theta value}
&b^\theta_0=\dim\ggg_\bz-\dim\zzz^\theta_\bz\cr
 & b^\theta_1=\dim\ggg_\bo-\dim\zzz^\theta_\bo.
\end{align}
And set
\begin{align}\label{eq: 1 KW version}
p^{\frac{b_0}{2}}2^{{\lfloor\frac{b_1}{2}\rfloor}}=\max_{\theta\in \ggg_\bz^*}p^{\frac{b^\theta_0}{2}}2^{{\lfloor\frac{b^\theta_1}{2}\rfloor}}.
\end{align}

In \cite{Shu}, the author proposed the following conjecture.
\begin{conj}\label{conj} Let $\ggg$ be a finite-dimensional restricted Lie superalgebra over $\bk$.
The maximal dimension of irreducible $\ggg$-modules  is  $p^{\frac{b_0}{2}}2^{\lfloor\frac{b_1}{2}\rfloor}$.
\end{conj}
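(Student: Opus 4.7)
The plan is to prove Conjecture \ref{conj} in full generality by establishing matching upper and lower bounds on $\dim L$ for irreducible $U(\ggg)$-modules $L$. Since every such $L$ factors through some $U_\chi(\ggg)$ with $\chi\in\ggg_\bz^*$, it suffices to show that for every $\chi$ one has $\dim L\le p^{b_0^\chi/2}2^{\lfloor b_1^\chi/2\rfloor}$, and then to exhibit an irreducible of exactly this dimension for some $\chi$ maximizing the right-hand side in \eqref{eq: 1 KW version}.

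For the upper bound, the strategy is to exploit the super-Poisson structure on the associated graded algebra $\gr U_\chi(\ggg)\cong (S(\ggg_\bz)/(\xi_i-\chi(x_i)^p))\otimes\bigwedge\ggg_\bo$. On the even part, the generic symplectic leaf of the Poisson bracket has dimension $b_0^\chi$ by Lemma \ref{lem: non-deg bil forms}(2), which produces a factor of $p^{b_0^\chi/2}$ in the generic PI-degree via Posner's theorem applied to the Frobenius-twisted center. On the odd part, $\ggg_\bo$ carries the symmetric bilinear form $\tsb_\chi$ (Lemma \ref{lem: non-deg bil forms}(4)), and its associated Clifford superalgebra has maximal irreducible dimension $2^{\lfloor b_1^\chi/2\rfloor}$, with the floor precisely accounting for an unpaired direction when $b_1^\chi$ is odd. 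Combining these via a super-version of Posner's theorem yields the bound for every $\chi$.

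For the lower bound, pick $\theta\in\ggg_\bz^*$ attaining the supremum in \eqref{eq: 1 KW version} and choose $\chi$ to be the $p$-character extending $\theta$. Lemma \ref{lem: non-deg bil forms}(3)--(4) supplies maximal isotropic subspaces of $\tilde\ggg$ of super-dimension $d(\ggg,\theta)$. The crucial step is to promote such a subspace to a polarizing \emph{subsuperalgebra} $\liep\supseteq\ggg^\theta$ of the same super-dimension, by a super-version of Vergne's inductive polarization argument: one enlarges $\ggg^\theta$ successively by homogeneous vectors whose bracket with the current subalgebra lies in $\ker\theta$, alternating parity to keep both the even and odd pieces isotropic. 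The functional $\theta$ then extends (by zero on the nilpotent radical) to a character $\tilde\theta\in\liep^*$ with $\tilde\theta([\liep,\liep])=0$, giving a one-dimensional $U_\chi(\liep)$-module $\bk_{\tilde\theta}$. The induced module
\[
M_\theta := U_\chi(\ggg)\otimes_{U_\chi(\liep)}\bk_{\tilde\theta}
\]
has $\bk$-dimension equal to $p^{b_0/2}2^{\lfloor b_1/2\rfloor}$ by the super-PBW theorem, and is irreducible whenever $\chi$ lies in the Azumaya locus of $U_\chi(\ggg)$ over its center, which is a Zariski-dense condition.

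The main obstacle is the super-Vergne construction of $\liep$. In the purely even setting Vergne's theorem guarantees a polarization for any $\theta$, but for Lie superalgebras the skew-symmetric form on $\tilde\ggg_\bz$ and the symmetric form on $\tilde\ggg_\bo$ interact in a way that blocks a direct transfer of the classical recipe; in particular the unpaired odd direction (responsible for the floor function in the statement) must be chosen to avoid violating the subalgebra condition $\theta([\liep,\liep])=0$. A closely related difficulty is to locate and describe the Azumaya locus of $U_\chi(\ggg)$ in sufficient generality to guarantee irreducibility of $M_\theta$, which in the classical case was achieved by Premet via a Harish-Chandra homomorphism. The strange classical cases proved in the body of this paper bypass both obstacles by exploiting the triangular decomposition $\ggg=\caln^-\oplus\hhh\oplus\caln^+$ and using baby Verma modules in place of abstract polarizations; a uniform super-Vergne theorem plus a general Azumaya-locus analysis would upgrade these case-by-case results to the full conjecture.
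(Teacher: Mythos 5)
The statement you are addressing is labeled \emph{Conjecture} \ref{conj}, and the paper does not prove it in this generality; it only verifies it for specific families (strange classical types $\tilde\ppp(n)$, $\qqq(n)$ and their derived subalgebras, building on the author's earlier verification for basic classical and completely solvable cases). So there is no general proof in the paper against which to compare yours; the honest comparison is between your proposed \emph{strategy} and the paper's case-by-case \emph{verifications}.

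Your sketch is a reasonable outline of how one might hope to attack the conjecture uniformly, and you are candid about where it breaks down, but as written it is not a proof and should not be presented as one. The two gaps you name are real and currently open. For the lower bound, there is no super-Vergne polarization theorem: the problem is exactly the one you flag, namely that the even form on $\tilde\ggg_\bz$ is alternating while the odd form on $\tilde\ggg_\bo$ is symmetric, and an isotropic odd vector you adjoin may destroy the subalgebra condition or force $\theta([\liep,\liep])\neq 0$; no one has shown that a homogeneous polarizing subalgebra of super-dimension $d(\ggg,\theta)$ always exists. The paper sidesteps this by exploiting the triangular decomposition $\ggg=\caln^-\oplus\hhh\oplus\caln^+$ available for the classical types: the Borel $\calb=\bbb_\bz\oplus\ggg_1$ (resp.\ $\hhh_\bz\oplus\hhh_\bo^\lambda\oplus\nnn^+$ in the queer case) \emph{is} an explicit polarization, and the baby Verma (resp.\ queer baby Verma) module plays the role of your $M_\theta$; irreducibility is then established by hand (Propositions \ref{prop: reg ss}, \ref{prop: reg nil} and Theorem \ref{thm: wz2 thm q}) for generic $\chi$, rather than by an Azumaya-locus argument.

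For the upper bound, your invocation of a ``super-Posner theorem'' applied to $\gr U_\chi(\ggg)$ is not justified: what is needed is the \emph{second} super-KW inequality, that every irreducible $U_\chi(\ggg)$-module has dimension at most $p^{b_0^\chi/2}2^{\lfloor b_1^\chi/2\rfloor}$, and this is precisely the part of Wang--Zhao's program \cite{WZ1,WZ2} that remains unverified for the strange series. Passing from symplectic leaf dimensions of the associated graded to irreducible-module dimensions of a filtered algebra does not follow formally even in the even case; the classical bound is Premet's deep theorem, not a PI-theory exercise. What the paper actually proves for the upper bound is much weaker and tailored to the cases at hand: after $\GL(n)$-conjugation, every irreducible is a quotient of a (queer) baby Verma module, and Lemma \ref{lem: bigger than n} (resp.\ Lemma \ref{lem: odd centl dim big}) shows that the baby Verma dimension already equals $p^{b_0/2}2^{\lfloor b_1/2\rfloor}$. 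That argument is elementary but relies on the explicit root-space structure of $\tilde\ppp(n)$ and $\qqq(n)$, and does not generalize. In short: your upper-bound step is a conjecture inside a conjecture, your lower-bound step requires a polarization theorem that is not available, and the final paragraph of your own write-up correctly says as much; presenting this as a proof of Conjecture \ref{conj} overstates what it achieves.
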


\begin{remark} This conjecture is a super version of the first Kac-Weisfeiler conjecture on irreducible modules of restricted Lie algebras (see \cite{Kac2}). For the latter, the study has progress, but the question is still open (see  \cite{PrSk}, \cite{MST}).
\end{remark}

\subsection{$\ggg=\tilde\ppp(n)$}\label{sec: periplectic sec show} In this subsection we fix $\ggg=\tilde\ppp(n)$. We have the following observation.
\begin{lemma}\label{lem: bigger than n}
The following statements hold.
\begin{itemize}
\item[(1)] For  any  $\theta\in \ggg_\bz^*$, $\dim\ggg_\bo^\theta\geq n$. Consequently, $b^\theta_1\leq n^2-n$.
\item[(2)] If $\chi$ is a regular semisimple $p$-character, then $\dim\ggg_1^\chi=n$. Consequently, $b_1=n^2-n$.

\end{itemize}
\end{lemma}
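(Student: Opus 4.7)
The plan is to exploit the $\bbz$-grading $\ggg=\ggg_{-1}\oplus\ggg_0\oplus\ggg_1$ of $\tilde\ppp(n)$ to reduce the computation of $\ggg_\bo^\theta$ to the radical of a single pairing $\ggg_1\times\ggg_{-1}\to\bk$, settle (1) by a rank–nullity count, and settle (2) by a root-weight argument.

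First I would observe that since $[\ggg_1,\ggg_1]\subset\ggg_2=0$ and $[\ggg_{-1},\ggg_{-1}]\subset\ggg_{-2}=0$, the symmetric bilinear form $\tsb_\theta(X,Y)=\theta([X,Y])$ on $\ggg_\bo$, well-defined because $[\ggg_\bo,\ggg_\bo]\subset\ggg_\bz$, vanishes on both $\ggg_1\times\ggg_1$ and $\ggg_{-1}\times\ggg_{-1}$ and hence reduces to a pairing $\ggg_1\times\ggg_{-1}\to\bk$. Because $\theta$ vanishes identically on $\ggg_\bo$ and $[X,\ggg_\bz]\subset\ggg_\bo$ for every $X\in\ggg_\bo$, the defining condition $\theta([X,\ggg])=0$ collapses to $\theta([X,\ggg_\bo])=0$. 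Splitting $X=X_1+X_{-1}$ separates the two requirements, giving
\begin{align*}
\ggg_\bo^\theta = \ggg_1^\theta\oplus\ggg_{-1}^\theta,
\end{align*}
where $\ggg_1^\theta$ and $\ggg_{-1}^\theta$ are respectively the left and right radicals of the pairing above.

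For (1), the induced non-degenerate pairing on $(\ggg_1/\ggg_1^\theta)\times(\ggg_{-1}/\ggg_{-1}^\theta)$ forces $\dim\ggg_1-\dim\ggg_1^\theta=\dim\ggg_{-1}-\dim\ggg_{-1}^\theta$. Since $\dim\ggg_1-\dim\ggg_{-1}=\tfrac{n(n+1)}{2}-\tfrac{n(n-1)}{2}=n$, this yields $\dim\ggg_1^\theta=\dim\ggg_{-1}^\theta+n\ge n$, so $\dim\ggg_\bo^\theta\ge n$ and consequently $b^\theta_1\le n^2-n$.

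For (2), using (\ref{eq: Ad iso of red env}) I would replace $\chi$ by an $\Ad(g)$-conjugate lying in $\hhh^*$ with $\chi(H_i)-\chi(H_j)\ne0$ for all $i\ne j$, so that $\chi$ is supported on $\hhh$. By the root description of $\ggg_{\pm1}$ recalled in \S\ref{sec: root system of p}, $\chi([X_\alpha,X_{-\beta}])=0$ unless the weight $\alpha-\beta$ is $0$, i.e.\ $\alpha=\beta$. The key point is that the roots $2\epsilon_i\in\Phi_1$ have no counterpart in $\Phi_{-1}$, so each $X_{2\epsilon_i}$ pairs trivially with every $X_{-(\epsilon_s+\epsilon_t)}\in\ggg_{-1}$, placing $\mathrm{span}\{X_{2\epsilon_i}\mid 1\le i\le n\}\subset\ggg_1^\chi$. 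For the remaining roots $\epsilon_i+\epsilon_j$ with $i<j$, a short matrix calculation (using the explicit root vectors of \S2.4) gives $[X_{\epsilon_i+\epsilon_j},X_{-(\epsilon_i+\epsilon_j)}]=H_j-H_i$ up to sign, and $\chi(H_j-H_i)\ne0$ by regularity; thus the matrix of the pairing relative to these dual pairs of root vectors is diagonal and invertible. Hence $\ggg_1^\chi=\mathrm{span}\{X_{2\epsilon_i}\mid 1\le i\le n\}$ has dimension exactly $n$. Combined with the identity in (1), $\dim\ggg_{-1}^\chi=0$ and $b_1=b_1^\chi=n^2-n$.

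The argument is essentially bookkeeping; the only mild subtlety is verifying that $[X_{\epsilon_i+\epsilon_j},X_{-(\epsilon_i+\epsilon_j)}]$ is a nonzero $\bk$-combination of $H_i,H_j$ under the paper's chosen normalisation, which is dispatched by a direct computation with the explicit matrices preceding the lemma. I do not anticipate any deeper obstacle.
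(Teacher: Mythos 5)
Your proof is correct, and for part~(1) it follows a genuinely different and arguably cleaner route than the paper. The paper bounds $\dim\ggg_\bo^\theta$ by invoking Lemma~\ref{lem: non-deg bil forms}(4): $\ggg_1$ is an isotropic subspace of $(\ggg_\bo,\tsb_\theta)$, and comparing $\dim\ggg_1=\tfrac{n(n+1)}{2}$ with the dimension of a maximal isotropic subspace yields the inequality. You instead exploit the $\{-1,0,1\}$-grading directly: since $\tsb_\theta$ vanishes identically on $\ggg_1\times\ggg_1$ and $\ggg_{-1}\times\ggg_{-1}$, it collapses to a single pairing $\ggg_1\times\ggg_{-1}\to\bk$, the radicals on the two sides give $\ggg_\bo^\theta=\ggg_1^\theta\oplus\ggg_{-1}^\theta$, and rank--nullity forces $\dim\ggg_1^\theta-\dim\ggg_{-1}^\theta=\dim\ggg_1-\dim\ggg_{-1}=n$, hence $\dim\ggg_\bo^\theta\ge n$. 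This is more self-contained (it bypasses the general isotropic-dimension lemma entirely) and gives the sharper identity $\dim\ggg_1^\theta=\dim\ggg_{-1}^\theta+n$ for free, which you then reuse at the end of part~(2) to deduce $\ggg_{-1}^\chi=0$ from $\dim\ggg_1^\chi=n$. For part~(2) your argument is the same root-theoretic idea as the paper's (pairing $X_{\epsilon_i+\epsilon_j}$ against $X_{-(\epsilon_i+\epsilon_j)}$, noting the $2\epsilon_i$ root vectors have no partners in $\Phi_{-1}$, and using regularity to make the resulting diagonal matrix invertible), but executed more transparently by exhibiting the matrix of the pairing in the root basis rather than arguing by contradiction as the paper does. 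The matrix computation $[X_{\epsilon_i+\epsilon_j},X_{-(\epsilon_i+\epsilon_j)}]=H_j-H_i$ that you flag as the only subtlety does check out with the paper's explicit root vectors.
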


\begin{proof} (1) Consider ${\tsb_\theta}|_{\ggg_\bo\times\ggg_\bo}: \ggg_\bo\times \ggg_\bo\rightarrow \bk$ mapping $(X,Y)$ onto $\theta([X,Y])$, which gives rise to a symmetric bilinear form on the space $\ggg_\bo$. We denote it by $\tsbo_\theta$.
 Let $\ker(\tsbo_\theta)=\{X\in \ggg_\bo\mid \tsbo_\chi(X,\ggg_\bo)=0\}$. Then $\ker(\tsbo_\theta)$ is exactly equal to the centralizer $\ggg_\bo^\theta$ of $\theta$ in $\ggg_\bo$. We can define a non-degenerate bilinear form arising from $\tsbo_\theta$ on the spaces $\tilde\ggg_\bo:=\ggg_\bo\slash \ggg_\bo^\theta$. By abuse of notation, this non-degenerate bilinear form are still denoted by $\tsbo_\theta$. Hence the dimension of maximal isotropic subspaces of $\tilde\ggg_\bo$ with respect of $\tsbo_\theta$ is  $\lceil\frac{\dim\ggg_\bo-\dim\ggg^\theta_\bo}{2}\rceil$. Consequently,  the dimension of maximal isotropic subspaces of $\ggg_\bo$ with respect of $\tsbo_\theta$ is equal to
 \begin{align*}
 &\lceil\frac{\dim\ggg_\bo-\dim\ggg^\theta_\bo}{2}\rceil+\dim\ggg^\theta_\bo\cr
 =&\lceil\frac{\dim\ggg_\bo+\dim\ggg^\theta_\bo}{2}\rceil\cr
 =&\lceil\frac{n^2+\dim\ggg^\theta_\bo}{2}\rceil.
 \end{align*}

  Now we have $\theta (\ggg_1,\ggg_1)=0$. This  means that $\ggg_1$ is an isotropic subspace of $\ggg_\bo$ with respect to $\tsbo_\theta$. Hence we have the following
 $$ \dim \ggg_1\leq \lceil\frac{n^2+\dim\ggg^\theta_\bo}{2}\rceil$$
 which means ${1\over 2}(n^2+n)\leq \lceil\frac{n^2+\dim\ggg^\theta_\bo}{2}\rceil$. Hence $\dim\ggg_\bo^\theta\geq n$. Consequently, $b^\theta_1\leq n^2-n$.

(2) It suffices to show that $\ggg^\chi_\bo$ coincides with $\mathcal{H}$. We verify this by contradiction. Suppose $\ggg^\chi$ contains properly $\mathcal{H}$. Then there exits nonzero $X=\sum_{i<j}(a_{ij}X_{\epsilon_i+\epsilon_j}+b_{ij}X_{-(\epsilon_i+\epsilon_j)})+\sum_{i=1}^nc_iX_{2\epsilon_i}\in \ggg^\chi$ with $a_{ij}, b_{ij}$ not all zero for $1\leq i<j\leq n$. For example, suppose $a_{ij}\ne0$, then $\chi([X, X_{-(\epsilon_i+\epsilon_j)}])=2a_{ij}\chi(H_i-H_j)\ne 0$. This contradicts the assumption that $X\in \ggg_\bo^\chi$.
Hence $\ggg^\chi_\bz$ must coincides with $\mathcal{H}$. The proof is completed.
\end{proof}

 \subsubsection{} Now we give a key result for periplectic Lie superalgebras.

\begin{prop}\label{thm: first KW for p} The maximal dimension of irreducible $\ggg$-modules  is  $p^{\frac{b_0}{2}}2^{\lfloor\frac{b_1}{2}\rfloor}=(2p)^{n(n-1)\over 2}$.
\end{prop}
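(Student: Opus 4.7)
The plan is a two-sided estimate: an upper bound obtained because every irreducible is a quotient of a generalized baby Verma module of fixed dimension, and a lower bound realized by an irreducible baby Verma at a regular semisimple $p$-character. Alongside, I must verify that the combinatorial invariant $\max_\theta p^{b_0^\theta/2} 2^{\lfloor b_1^\theta/2 \rfloor}$ equals $(2p)^{n(n-1)/2}$, so that the conjecture as stated matches the numerical bound.

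First I would identify $(b_0,b_1)$ explicitly. Since $\ggg_\bz \cong \gl(n)$, for any $\theta \in \ggg_\bz^*$ the centralizer $\ggg_\bz^\theta$ has dimension $\geq n$, with equality exactly for regular semisimple $\theta$; hence $b_0^\theta \leq n^2-n$, with the maximum attained at regular semisimple $\theta$. Lemma~\ref{lem: bigger than n}(1) gives $b_1^\theta \leq n^2-n$, and (2) says this is attained at regular semisimple $\chi$. Both maxima being reached simultaneously at the same $\chi$ (and $n(n-1)$ being even, so $\lfloor b_1/2 \rfloor = b_1/2$), we obtain
\begin{equation*}
\max_{\theta \in \ggg_\bz^*} p^{\frac{b_0^\theta}{2}} 2^{\lfloor \frac{b_1^\theta}{2}\rfloor} = p^{n(n-1)/2} \cdot 2^{n(n-1)/2} = (2p)^{n(n-1)/2}.
\end{equation*}

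For the upper bound, by \S\ref{sec: cong} and the isomorphism (\ref{eq: Ad iso of red env}), it suffices to consider $p$-characters $\chi$ satisfying (\ref{hyp: ann n+}). For any such $\chi$, any irreducible $U_\chi(\ggg)$-module is a quotient of some $Z_\chi(\lambda)$ by Proposition~\ref{prop: reg ss}(1) (the standard reduction: $\caln^+ = \nnn_\bz^+ \oplus \ggg_1$ is a nilpotent ideal in $\calb$, any highest-weight vector annihilated by $\caln^+$ generates a one-dimensional $U_\chi(\calb)$-submodule $\bk_\lambda$ with $\lambda \in \Lambda(\chi)$, and Frobenius reciprocity provides the surjection). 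Since the vector-space decomposition $Z_\chi(\lambda) = U_\chi(\caln^-) \otimes \bk_\lambda$ together with $\caln^- = \nnn_\bz^- \oplus \ggg_{-1}$ of super-dimension $(n(n-1)/2 \mid n(n-1)/2)$ gives
\begin{equation*}
\dim Z_\chi(\lambda) = p^{n(n-1)/2} \cdot 2^{n(n-1)/2} = (2p)^{n(n-1)/2},
\end{equation*}
every irreducible of $\ggg$ has dimension at most $(2p)^{n(n-1)/2}$.

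For the lower bound, apply Proposition~\ref{prop: reg ss}(2): choose any regular semisimple $\chi$ (which exists abundantly by Remark~\ref{rem: regular ss dense}) and any $\lambda \in \Lambda(\chi)$; then $Z_\chi(\lambda)$ is already irreducible and realizes the dimension $(2p)^{n(n-1)/2}$. The only potential friction is the verification that every irreducible really is a quotient of some $Z_\chi(\lambda)$ — but this is immediate from the fact (noted just before Proposition~\ref{prop: reg ss}) that $U_\chi(\calb)$ is solvable with irreducibles exhausted by the one-dimensional $\bk_\lambda$'s. After that, the argument is a pure bookkeeping of super-dimensions using inputs already proved in the paper.
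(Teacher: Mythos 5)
Your proof is correct and follows the same strategy as the paper: upper bound via the dimension of the generalized baby Verma module $Z_\chi(\lambda)$, lower bound via its irreducibility at regular semisimple $\chi$ (Proposition~\ref{prop: reg ss}(2)), and the combinatorial identification of $b_0,b_1$ via Lemma~\ref{lem: bigger than n} together with the classical $\gl(n)$ centralizer bound. One minor imprecision: equality $\dim\ggg_\bz^\theta = n$ holds for all regular $\theta$ (nilpotent included), not \emph{exactly} for regular semisimple, but this does not affect the argument since regular semisimple $\chi$ does attain it.
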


\begin{proof} Note that up to the Morita equivalence arising from $\GL(n)$-conjugation,  any irreducible $U(\ggg)$-module $S$ must be an irreducible quotient of $Z_\theta(\mu)$ for some $\theta\in \ggg_\bz^*$ and $\mu\in\Lambda(\theta)$. Hence any irreducible module has dimension less than $\dim Z_\theta(\mu)=\dim U_\theta(\caln^-)=p^{\dim\nnn_\bz^{-}}2^{\dim \ggg_{-1}}$.

By Proposition \ref{prop: reg ss}, $Z_\theta(\lambda)$ is irreducible when $\theta$ is regular semisimple. Hence the maximal dimension of irreducible $U(\ggg)$-module is $p^{\dim\nnn_\bz^{-}}2^{{n(n-1)}\over 2}$.

Furthermore, take $\chi$ to be  regular semisimple. Naturally, it is regular semisimple with respect to $\ggg_\bz\cong \gl(n)$. Hence $\dim \nnn_\bz^-=\max_{\theta\in \ggg_\bz^*}{{\dim \ggg_\bz-\dim \ggg_\bz^\theta}\over 2}={b_0\over 2}$ by a classical result (see \cite{FP88} or \cite{Jan97}). On the other hand, for any  $\theta\in \ggg_\bz^*$ by Lemma \ref{lem: bigger than n}(1) we always have
$\dim\ggg_\bo-\dim\ggg_\bo^\theta\leq n^2-n$.
  In the meanwhile, for the regular semisimple $p$-character $\chi$, $\ggg_\bo^\chi=\sum_{i=1}^n\bk E_{i,n+i}$ by Lemma \ref{lem: bigger than n}(2),
  which means
$\dim\ggg_\bo-\dim\ggg_\bo^\chi=n^2-n$.   Hence we have
\begin{align*}
{\lfloor {b_1\over 2}\rfloor}=\max_{\theta\in \ggg_\bz^*}\lfloor{{{\dim\ggg_\bo-\dim\ggg_\bo^\theta}\over 2}\rfloor}={{n^2-n}\over 2}=\dim \nnn^-_{\bz}.
\end{align*}
Summing up, we have $p^{{b_0\over 2}}2^{\lfloor {b_1\over 2}\rfloor}=p^{\dim\nnn_\bz^-}2^{\dim \nnn^-_{\bz}}=\dim Z_\chi(\lambda)$ for a regular semisimple $p$-character $\chi$ and $\lambda\in \Lambda(\chi)$.

The proof is completed.
\end{proof}

\subsection{$\ggg=\qqq(n)$.} Now let $\chi\in\ggg^*_\bz$ be arbitrarily given.
 %
 Clearly, $\dim\ggg_\bo^\chi\geq 0$ and  $b^\chi_1\leq \dim\ggg_\bo=n^2$.
 Furthermore, we have a more precise  observation, prior to which we  need some preparations. We need to reformulate Definition  \ref{def: regular ss} for regular semisimple $p$-characters in the case of periplectic case. Recall there are basis elements $J_k=E_{kk}+ E_{n+k,n+k}$ for $k=1,\ldots,n$ in $\hhh_\bz$.

 \begin{defn}\label{defn: regular ss for q}
 \begin{itemize}
 \item[(1)] Say that a $p$-character $\theta\in\ggg_\bz^*$ is regular semisimple if under $\textsf{Ad}(g)$ for $g\in\text{GL}(n)$  it is conjugate to
 \begin{align}\label{eq: regular ss q}
 \chi\in\hhh_\bz^*\subset \ggg_\bz^* \text{ such that }\chi(J_k) \text{ are mutually different nonzero number for }  k=1,2,\ldots,n.
  \end{align}
 \item[(2)]
 Say that $\chi\in\ggg^*_\bz$ is strongly regular semisimple for queer Lie superalgebra $\ggg=\qqq(n)$ if $\chi$ is regular semisimple and additionally
 \begin{align}\label{eq: str reg ss}
 \chi(\textsf{Ad}(g)(J_i+J_j))\ne0  \text{  for all }1\leq i<j\leq n
 \end{align}
 for the same $g\in \GL(n)$ as (1).

 \end{itemize}

\end{defn}

\begin{remark} The set of strongly regular semisimple $p$-characters in $\hhh^*$ forms a Zariski dense subset of $\hhh^*$.
\end{remark}



\begin{lemma}\label{lem: odd centl dim big}
 Let $\chi\in\ggg^*_\bz$ be a strongly regular semsimple $p$-character as in Definition \ref{def: regular ss} with $g=$ identity. Then $b_1^\chi=n^2$. Correspondingly, $b_1=n^2$.
\end{lemma}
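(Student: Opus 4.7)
The plan is to reduce the statement to showing $\dim \ggg_\bo^\chi = 0$, from which $b_1^\chi = \dim\ggg_\bo - \dim\ggg_\bo^\chi = n^2$, and the trivial upper bound $b_1 \le \dim\ggg_\bo = n^2$ combined with $b_1 \ge b_1^\chi$ then forces $b_1 = n^2$. So the entire content is the vanishing of the odd centralizer of the strongly regular semisimple $\chi$.

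First I would observe that $\ggg_\bo^\chi$ coincides with the kernel of the symmetric bilinear form $\tsb_\chi\colon \ggg_\bo\times\ggg_\bo \to \bk$, $(X,Y)\mapsto \chi([X,Y])$. Indeed, for $X\in \ggg_\bo$, the defining condition $\chi([X,\ggg]) = 0$ is automatic on $\ggg_\bz$, because $[\ggg_\bo,\ggg_\bz]\subset \ggg_\bo$ and $\chi\in \ggg_\bz^*$ vanishes on $\ggg_\bo$. So only the restriction to $\ggg_\bo\times\ggg_\bo$ matters.

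Next I would compute $\tsb_\chi$ in the natural basis $\{J'_{ij}\mid 1\le i,j\le n\}$ of $\ggg_\bo$, where $J'_{ij}$ is the matrix of the form in (\ref{eq: defn q}) with $A=0$ and $B = E_{ij}$ (so $J'_{ii}=J'_i$). Using the block description of $\qqq(n)$ and the anticommutator formula for the super-bracket of two odd elements, a direct calculation gives
\[
[J'_{ij}, J'_{kl}] = \delta_{jk}\, J_{il} + \delta_{li}\, J_{kj},
\]
where $J_{ab}$ denotes the even matrix with $A=E_{ab}$, $B=0$ (so $J_{ii}=J_i$). Since $\chi\in\hhh_\bz^*$ vanishes on off-diagonal $J_{ab}$, applying $\chi$ yields
\[
\chi([J'_{ij}, J'_{kl}]) = \delta_{jk}\delta_{il}\bigl(\chi(J_i)+\chi(J_j)\bigr).
\]
Reordering the basis appropriately, the Gram matrix of $\tsb_\chi$ is block-diagonal: $n$ one-dimensional blocks with entries $2\chi(J_i)$ ($i=1,\ldots,n$), together with $\binom{n}{2}$ hyperbolic blocks $\left(\begin{smallmatrix} 0 & \chi(J_i)+\chi(J_j) \\ \chi(J_i)+\chi(J_j) & 0 \end{smallmatrix}\right)$ pairing $J'_{ij}$ with $J'_{ji}$ for $1\le i<j\le n$.

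Finally, the strong regular semisimplicity of $\chi$ (with $g$ the identity) says exactly that $\chi(J_i)\ne 0$ for every $i$ and $\chi(J_i)+\chi(J_j)\ne 0$ for every $1\le i<j\le n$; combined with $p>2$ (so $2\chi(J_i)\ne 0$), every block of the Gram matrix is invertible. Hence $\tsb_\chi$ is non-degenerate, $\ggg_\bo^\chi=0$, and $b_1^\chi = n^2$, giving $b_1 = n^2$. The only real work is the bracket formula for $[J'_{ij}, J'_{kl}]$; once this is verified, nothing else is subtle, and I do not anticipate any serious obstacle.
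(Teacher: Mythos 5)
Your proof is correct, and the underlying idea is the same as the paper's: reduce $b_1^\chi=n^2$ to the assertion that the symmetric form $\tsb_\chi$ restricted to $\ggg_\bo\times\ggg_\bo$ (the paper calls it $\tsg_\chi$) is non-degenerate, i.e.\ that $\ggg_\bo^\chi=\ker\tsb_\chi=0$. Your bracket formula $[J'_{ij},J'_{kl}]=\delta_{jk}J_{il}+\delta_{li}J_{kj}$ is right (two odd elements anticommute in the Lie superalgebra, and the anticommutator of $E_{ij}$ and $E_{kl}$ is exactly $\delta_{jk}E_{il}+\delta_{li}E_{kj}$), so the Gram matrix decomposes as you describe into $n$ scalar blocks $2\chi(J_i)$ and $\binom n2$ hyperbolic blocks with off-diagonal entry $\chi(J_i)+\chi(J_j)$, and strong regular semisimplicity plus $p>2$ forces each block to be invertible.

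Where you differ from the paper is in completeness of execution, and the difference is in your favor. The paper's proof writes an arbitrary $X\in\ggg_\bo$ as a combination $\sum_{i=1}^{n-1}(a_iX'_{\alpha_i}+b_iY'_{\alpha_i})+\sum_{k=1}^n c_kJ'_k$, using only the \emph{simple} odd root vectors $X'_{\alpha_i},Y'_{\alpha_i}$ ($\alpha_i=\epsilon_i-\epsilon_{i+1}$) and the odd Cartan part; that is a span of dimension $3n-2$, not all of $\ggg_\bo\cong\gl(n)$, so as literally written the paper's case analysis does not test the kernel against the non-simple odd root vectors $X'_{\epsilon_i-\epsilon_j}$ with $j>i+1$. (The argument does extend to those vectors — pairing $X'_{\epsilon_i-\epsilon_j}$ with $Y'_{\epsilon_i-\epsilon_j}$ produces $\chi(J_i+J_j)\ne0$ — but the paper does not say so.) Your version avoids the issue entirely by taking the natural basis $\{J'_{ij}\}$ of $\ggg_\bo$, computing the full Gram matrix in one shot, and reading off its block-diagonal structure. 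That is a cleaner and genuinely airtight form of the same argument.

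One small point of bookkeeping: the statement references Definition~\ref{def: regular ss}, which is the periplectic definition; the intended reference is clearly Definition~\ref{defn: regular ss for q} (strongly regular semisimple for $\qqq(n)$). You interpreted it correctly, and this is where the two conditions $\chi(J_i)\ne0$ and $\chi(J_i)+\chi(J_j)\ne0$ for $i<j$ come from.
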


\begin{proof} Similar to $\tsf_\lambda$ defined in \S\ref{sec: strange baby}, we can define a map $\tsg_\chi: \ggg_\bo\otimes \ggg_\bo\rightarrow \bk$ sending $(X,Y)$ to $\chi([X,Y])$ for $X,Y\in\ggg_\bo$. Then $\tsg_\chi$  becomes a symmetric bilinear form on $\ggg_\bo$. Then $\ker(\tsg_\chi)$ coincides with $\ggg_\bo^\chi$.
We claim that $\tsg_\chi$ is non-degenerate which means that for any nonzero $X\in \ggg_\bo$, there exists $Y\in\ggg_\bo$ such that $\tsg(X,Y)\ne0$.

Recall that $\ggg_\bo$
is another isomorphic copy of $\gl(n)$ under the adjoint action of $\ggg_\bz$. Then $\ggg_\bz$ and $\ggg_\bo$ have correspondingly ``Chevalley  basis elements"
$$\{X_{\alpha_i}, J_k,  Y_{\alpha_i}\mid i=1,\ldots,n-1, k=1,\ldots,n\} \text{ and } \{X'_{\alpha_i}, J'_k,  Y'_{\alpha_i}\mid i=1,\ldots,n-1, k=1,\ldots,n\}$$
respectively, where $\alpha_i=\epsilon_i-\epsilon_{i+1}$,  $i=1,\cdots,n-1$, are simple roots.  For any nonzero $X\in \ggg_\bo$, we can write
$$X=\sum_{i=1}^{n-1} (a_iX'_{\alpha_i}+ b_iY'_{\alpha_i}+\sum_{k=1}^nc_kJ'_{k})$$
with $a_i, b_i,c_k\in\bk$ not all equal to zero for $i=1,\ldots, n-1; k=1,\ldots,n$. Divide arguments into different cases.

 (1) Suppose $a_i\ne 0$,  we can take  $Y=Y'_{\alpha_i}$, then $\chi(X,Y)=a_i\chi(J_i+J_{i+1})\ne0$.

(2) Suppose $b_i\ne 0$,  we can take  $Y=X'_{\alpha_i}$, then $\chi(X,Y)=b_i\chi(J_i+J_{i+1})\ne0$.

(3) Suppose $c_k\ne 0$, we can take $Y=J'_k$. Then $\chi(X,Y)=2c_k\chi(J_k)\ne 0$.

Summing up, we already show that $\tsg_\chi$ is non-degenerate.  Hence, $\ggg_\bo^\chi=0$, and then $b_1^\chi=n^2$.  By the analysis in the beginning of this subsection along the arguments around (\ref{eq: ann n ev +}), we already know that $b_1^\theta\leq n^2$ for all $\theta\in \ggg^*_\bz$. Hence, $b_1=n^2$. Thus we accomplish the proof.
\end{proof}

\subsubsection{} Keep the notations as in Lemma \ref{lem: odd centl dim big}.  In order to verify the $1^{\text{st}}$ sKW conjecture for $\qqq(n)$, we will further suppose $\chi(\nnn_\bz^+) = 0$ without loss of generality, as explained  around (\ref{eq: ann n ev +}). In this case, we have define queer Verma modules $Z^\qqq_\chi(\lambda)=U_\chi(\ggg)\otimes_{U_\chi(\bbb)}V_\chi(\lambda)$ for $\lambda\in \Lambda(\chi)$. Now let us investigate the least number of $\dim\hhh^\lambda_\bo$ when $\chi$ and $\lambda\in \Lambda(\chi)$ vary, which will be denoted by $\ell$.

\begin{lemma}\label{lem: maximal ell}
 $\ell=\lceil{n\over 2}\rceil$. Furthermore, if $\lambda\in \Lambda(\chi)$ for a regular semisimple $\chi$, then $\dim(\hhh_\bo^\lambda)=\ell$.
\end{lemma}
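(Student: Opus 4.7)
The plan is to reduce the claim to an explicit diagonal bilinear-form computation on $\hhh_\bo$, followed by a short combinatorial minimisation over the $p$-restricted solution set $\Lambda(\chi)$. The first step is to make $\tsf_\lambda$ explicit in the standard basis $\{J'_1,\ldots,J'_n\}$ of $\hhh_\bo$. A direct matrix computation in the realisation of \S\ref{sec: st csa and roots} yields $[J'_i,J'_j]=2\delta_{ij}J_i$ (the super bracket of odd elements being the anticommutator), so the Gram matrix of $\tsf_\lambda$ in $\{J'_i\}$ is the diagonal matrix with entries $2\lambda_1,\ldots,2\lambda_n$, where $\lambda_i:=\lambda(J_i)$. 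Writing $k:=\#\{i:\lambda_i=0\}$, the radical is $\ker\tsf_\lambda=\bigoplus_{\lambda_i=0}\bk J'_i$ of dimension $k$, and the induced form on $\hhh_\bo/\ker\tsf_\lambda$ is non-degenerate symmetric of rank $n-k$.

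Since $\bk$ is algebraically closed of characteristic $p>2$, the Witt index of a non-degenerate symmetric form of rank $m$ over $\bk$ equals $\lceil m/2\rceil$ in the paper's convention of \S\ref{sec: gen notions for conj}. As every maximal isotropic subspace of $\hhh_\bo$ contains the radical and projects to a maximal isotropic subspace of the quotient, this gives
$$\dim\hhh^\lambda_\bo=k+\lceil (n-k)/2\rceil.$$
The next step is to describe which values of $k$ are achievable as $\chi$ and $\lambda\in\Lambda(\chi)$ vary. For each $i$, the equation $\lambda_i^p-\lambda_i=\chi(J_i)^p$ has $p$ distinct roots in $\bk$ (its derivative in $\lambda_i$ is $-1$); the root $\lambda_i=0$ occurs if and only if $\chi(J_i)=0$, and in that case the remaining $p-1\geq 2$ roots are all nonzero. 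Consequently, for every $\chi$ one can choose $\lambda\in\Lambda(\chi)$ with all $\lambda_i\ne 0$, i.e., with $k=0$.

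A short case check (based on the relation $f(k+2)=f(k)+1$) shows that the map $f\colon k\mapsto k+\lceil (n-k)/2\rceil$ is non-decreasing on $\{0,1,\ldots,n\}$, so its minimum is attained at $k=0$ and equals $\lceil n/2\rceil$. This yields $\ell=\lceil n/2\rceil$. For the \emph{furthermore} assertion, regular semisimplicity of $\chi$ in the sense of Definition~\ref{defn: regular ss for q} forces $\chi(J_i)\ne 0$ for every $i$, so every $\lambda\in\Lambda(\chi)$ automatically satisfies $\lambda_i\ne 0$ for all $i$; hence $k=0$ and $\dim\hhh^\lambda_\bo=\lceil n/2\rceil=\ell$. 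I expect the only real hazard in writing this out to be bookkeeping with the paper's non-standard convention for $\lceil\cdot\rceil$ and $\lfloor\cdot\rfloor$ when comparing to textbook statements about Witt indices; the underlying mathematics is elementary linear algebra together with the structure of the $p$-polynomial equations defining $\Lambda(\chi)$.
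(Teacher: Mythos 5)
Your proof is correct and follows essentially the paper's own line: make $\tsf_\lambda$ explicit as the diagonal form $\mathrm{diag}(2\lambda_1,\ldots,2\lambda_n)$, read off $\dim\hhh^\lambda_\bo=k+\lceil(n-k)/2\rceil=\lceil(n+k)/2\rceil$ with $k=\#\{i:\lambda_i=0\}$, observe that $k=0$ is always achievable and minimises this quantity, and for regular semisimple $\chi$ deduce $k=0$ from $\chi(J_i)\neq 0$. Incidentally, your formula $\lceil(n+k)/2\rceil$ silently fixes a sign slip in the paper's step (1), which writes the maximal isotropic dimension as $\lceil(n-\dim\ker\tsf_\lambda)/2\rceil$ and concludes $\dim\hhh^\lambda_\bo\leq\lceil n/2\rceil$; since every maximal isotropic of $\hhh_\bo$ contains the radical $\ker\tsf_\lambda$, the correct value is $\lceil(n+\dim\ker\tsf_\lambda)/2\rceil\geq\lceil n/2\rceil$, the lower bound that the definition of $\ell$ actually requires.
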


\begin{proof} For  $\chi\in\ggg^*_\bz$ with $\chi(\nnn^+_\bz)=0$, let $\bar\chi$ be the corresponding to the semisimple $p$-character, i.e. $\bar\chi(\hhh_\bz)=\chi(\hhh)$ and $\bar\chi(\nnn_\bz^\pm)=0$. By definition, $\tsf_{\chi}=\tsf_{\bar\chi}$ and  $\Lambda(\chi)=\Lambda(\bar\chi)$.
Consequently, we have $V_\chi(\lambda)=V_{\bar\chi}(\lambda)$ for $\lambda\in \Lambda(\chi)=\Lambda(\bar\chi)$.

Hence, we can only consider semisimple $p$-character $\chi$  when we focus ourselves on the least number of $\dim\hhh^\lambda_\bo$ with $\chi$ ranging through $\ggg^*_\bz$ (but $\chi(\nnn_\bz^+)=0$) and $\lambda\in \Lambda(\chi)$. We proceed with steps.

(1) Consider $\hhh_\bo\slash \ker(\tsf_\lambda)$. Then by the same reason as in Lemma \ref{lem: bigger than n}(2), we have the dimension of maximal isotropic spaces in $\hhh_\bo$ with respect to $\tsf_\lambda$ is $\lceil {{n-\dim \ker(\tsf_\lambda)}\over 2}\rceil$. Hence $\dim \hhh^\lambda_\bo\leq \lceil {n\over 2}\rceil$.

(2) We suppose $\chi$ is regular semisimple as in (\ref{eq: regular ss q}) without loss of generality. Note that  $\chi(J_k)\ne0$ implies $\lambda(J_k)\ne0$. Hence by the same arguments in the arguments (3)  of the proof of Lemma \ref{lem: odd centl dim big}, we can prove that $\tsf_\lambda$ is non-degenerate. Hence $\dim \hhh^\lambda_\bo=\lceil {{\dim\hhh_\bo}\over 2}\rceil=\lceil {n\over 2}\rceil$.

Summing up, we accomplish the proof.
\end{proof}

\subsubsection{}

\begin{prop} \label{lem: hope} Suppose $\chi$ is strongly regular semisimple $p$-character as in (\ref{eq: regular ss q}) satisfying (\ref{eq: str reg ss}) for $g=\id$. Then the following statements hold.
\begin{itemize}
\item[(1)] There exists $\lambda\in \Lambda(\chi)$ such that
$\dim\hhh^\lambda_\bo=\ell$.
\item[(2)] The queer baby Verma module $Z_\chi^\qqq(\lambda)$ is irreducible for  $\lambda\in \Lambda(\chi)$.

\item[(3)] $\dim Z_\chi^\qqq(\lambda)=p^{n(n-1)\over 2} 2^{\lfloor {n^2\over 2}\rfloor}$.
\end{itemize}
\end{prop}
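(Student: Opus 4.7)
Each of the three parts reduces to previously established results once the precise content of the strongly regular semisimple hypothesis is unpacked. I plan to dispatch (1) directly from Lemma \ref{lem: maximal ell}, to deduce (2) from the Wang--Zhao irreducibility criterion Theorem \ref{thm: wz2 thm q} by verifying $\Phi(\lambda)\neq 0$, and to settle (3) by a PBW-type dimension count combined with the value of $\ell$ from (1).

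\textbf{Part (1).} Since $\chi$ is strongly regular semisimple (with $g=\id$), it already lies in $\hhh_\bz^*$, so $\chi(\nnn_\bz^+)=0$ automatically and $\chi$ is in particular regular semisimple. The defining equations $\lambda_k^{p}-\lambda_k=\chi(J_k)^p$ are solvable over the algebraically closed field $\bk$, so $\Lambda(\chi)\neq \emptyset$, and the second assertion of Lemma~\ref{lem: maximal ell} gives $\dim\hhh_\bo^\lambda=\ell$ for every such $\lambda$.

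\textbf{Part (2).} I apply Theorem~\ref{thm: wz2 thm q}: since $\chi\in\hhh_\bz^*$, the semisimplicity and $\chi(\nnn_\bz^{\pm})=0$ hypotheses are met, so it suffices to show
\[
\Phi(\lambda)=\prod_{1\leq i<j\leq n}(\lambda_i+\lambda_j)\prod_{k=1}^{p-1}(\lambda_i-\lambda_j-k)\neq 0.
\]
For the factor $\lambda_i+\lambda_j$, the Frobenius identity in characteristic $p$ yields
$(\lambda_i+\lambda_j)^p-(\lambda_i+\lambda_j)=\chi(J_i+J_j)^p$, so vanishing of $\lambda_i+\lambda_j$ would force $\chi(J_i+J_j)=0$, contradicting~(\ref{eq: str reg ss}). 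For the factors $\lambda_i-\lambda_j-k$ with $1\leq k\leq p-1$, the analogous computation gives $(\lambda_i-\lambda_j)^p-(\lambda_i-\lambda_j)=\chi(J_i-J_j)^p$; since the scalars $\chi(J_k)$ are mutually distinct, $\chi(J_i-J_j)\neq 0$, and hence $\lambda_i-\lambda_j\notin\bbf_p$ (elements of $\bbf_p$ satisfy $x^p=x$). In particular $\lambda_i-\lambda_j\neq 1,2,\ldots,p-1$. Thus $\Phi(\lambda)\neq 0$ and Theorem~\ref{thm: wz2 thm q} delivers irreducibility of $Z_\chi^\qqq(\lambda)$.

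\textbf{Part (3) and the main obstacle.} Using the vector-space description $Z_\chi^\qqq(\lambda)=U_\chi(\nnn^-)\otimes V_\chi(\lambda)$ from \S\ref{sec: strange baby}, the PBW basis for $U_\chi(\nnn^-)$ gives $\dim U_\chi(\nnn^-)=p^{\dim\nnn_\bz^-}2^{\dim\nnn_\bo^-}$. Since $\ggg_\bo\cong \gl(n)$ as a $\ggg_\bz$-module, $\dim\nnn_\bz^-=\dim\nnn_\bo^-=\tfrac{n(n-1)}{2}$. From Part (1), $\dim V_\chi(\lambda)=2^{\dim\hat\hhh_\bo^\lambda}=2^{n-\ell}$, so
\[
\dim Z_\chi^\qqq(\lambda)=p^{n(n-1)/2}\cdot 2^{n(n-1)/2+n-\ell}.
\]
A short parity check (with $\ell=\lceil n/2\rceil$ in the paper's convention, that is, the greatest integer lower bound of $n/2$) shows that $\tfrac{n(n-1)}{2}+n-\ell$ equals $\lfloor n^2/2\rfloor$ in the same convention, both for $n=2m$ (giving $2m^2$) and for $n=2m+1$ (giving $2m^2+2m+1$), completing the computation. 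The only substantive subtlety in the whole argument is the second half of Part~(2): ruling out $\lambda_i-\lambda_j\in\{1,\ldots,p-1\}$ is strictly stronger than $\lambda_i\neq \lambda_j$, and it is at exactly this point that the Artin--Schreier observation---and hence the distinctness (not just non-vanishing) of the scalars $\chi(J_k)$ built into Definition~\ref{defn: regular ss for q}---becomes indispensable.
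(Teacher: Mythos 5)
Your proof is correct and follows essentially the same route as the paper: part (1) is read off from Lemma \ref{lem: maximal ell}, part (2) is deduced from the Wang--Zhao irreducibility criterion (Theorem \ref{thm: wz2 thm q}) by verifying $\Phi(\lambda)\neq 0$ via the Artin--Schreier relations $(\lambda_i\pm\lambda_j)^p-(\lambda_i\pm\lambda_j)=\chi(J_i\pm J_j)^p$, and part (3) is the dimension count $\dim U_\chi(\nnn^-)\cdot\dim V_\chi(\lambda)$. The only place you go beyond the paper is that you actually carry out the parity arithmetic in (3) (the paper merely asserts that (3) follows from (1) and (2)), and you correctly track the paper's nonstandard $\lceil\cdot\rceil$/$\lfloor\cdot\rfloor$ convention in doing so.
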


\begin{proof} (1) follows from Lemma \ref{lem: maximal ell}. (3) is a direct consequence of (1) and (2).  We only need to prove (2).

 By the definition of regular semisimple $p$-characters,  $h_{ij}:=\chi(J_i-J_j)\ne 0$ for $1\leq i<j\leq n$. Hence for $\lambda\in \Lambda(\chi)$ we have $$(\lambda_i-\lambda_j)^p-(\lambda_i-\lambda_j)=h_{ij}^p\ne 0.$$
  Hence $\lambda_i-\lambda_j-k$ is nonzero for all $1\leq i<j\leq n$ and $k\in\{1,\ldots,p-1\}$. By the same arguments, the assumption $\chi(J_i+J_j)\ne 0$ entails that $\lambda_i+\lambda_j\ne 0$ for all $i\ne j$. So $\phi(\lambda_i,\lambda_j)\ne0$. It follows that $\Phi(\lambda)\ne0$.  Hence the part (2)  is a direct consequence of  Theorem \ref{thm: wz2 thm q}. The proof is completed.
\end{proof}

\subsection{The main results of this section}
\begin{theorem}\label{thm: kw conj for strange} The first super Kac-Weifeiler conjecture is true for $\tilde\ppp(n)$ and $\qqq(n)$.
%
\end{theorem}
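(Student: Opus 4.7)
The plan is to assemble the preparatory results above into a matching calculation; the real content lives in the earlier propositions and lemmas, while the theorem itself is essentially bookkeeping.

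For $\ggg = \tilde\ppp(n)$, the statement will follow immediately from Proposition~\ref{thm: first KW for p}, where the equalities $b_0/2 = \dim\nnn_\bz^-$ and $\lfloor b_1/2 \rfloor = n(n-1)/2 = \dim \ggg_{-1}$ have already been verified using a regular semisimple $p$-character $\chi$, and the baby Verma $Z_\chi(\lambda)$ has been shown irreducible via Proposition~\ref{prop: reg ss}, realizing the maximum $(2p)^{n(n-1)/2}$. Thus the $\tilde\ppp(n)$ case requires no further argument.

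For $\ggg = \qqq(n)$, I would first identify the conjectural target. Since $\ggg_\bz \cong \gl(n)$, a regular semisimple $\theta \in \ggg_\bz^*$ has centralizer of dimension $n$ in $\ggg_\bz$, giving $b_0 = n^2 - n$. Lemma~\ref{lem: odd centl dim big} gives $b_1 = n^2$, attained at any strongly regular semisimple $\chi$. Hence the conjectural bound reads $p^{n(n-1)/2}\, 2^{\lfloor n^2/2 \rfloor}$, which I would then match against the dimensions of queer baby Verma modules.

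Attainment is the content of Proposition~\ref{lem: hope}(3): for a strongly regular semisimple $\chi$ and $\lambda \in \Lambda(\chi)$ with $\dim \hhh_\bo^\lambda = \ell = \lceil n/2 \rceil$, the module $Z_\chi^\qqq(\lambda)$ is irreducible of dimension $p^{n(n-1)/2}\, 2^{\lfloor n^2/2 \rfloor}$. For the upper bound, any irreducible $U(\ggg)$-module $S$ admits a $p$-character $\chi$ which, after $\GL(n)$-conjugation (see \S\ref{sec: conv on chi}), may be assumed to annihilate $\nnn_\bz^+$; then $S$ is an irreducible quotient of some $Z_\chi^\qqq(\mu)$ of dimension
\[
p^{\dim \nnn_\bz^-} \cdot 2^{\dim \nnn_\bo^- + \dim \hat\hhh_\bo^\mu} = p^{n(n-1)/2} \cdot 2^{n(n-1)/2 + \dim \hat\hhh_\bo^\mu}.
\]
By Lemma~\ref{lem: maximal ell}, $\dim \hhh_\bo^\mu \geq \ell$, so $\dim \hat\hhh_\bo^\mu \leq \lfloor n/2 \rfloor$; checking both parities of $n$ confirms $n(n-1)/2 + \lfloor n/2 \rfloor = \lfloor n^2/2 \rfloor$, which yields the required upper bound.

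Since all non-trivial ingredients are in hand, I anticipate no genuine obstacle. The hard parts have already been dispatched: the irreducibility of baby Verma modules at regular semisimple (and regular nilpotent) characters in Propositions~\ref{prop: reg ss} and~\ref{lem: hope}, relying on Theorem~\ref{thm: wz2 thm q} for $\qqq(n)$, together with the centralizer dimension computations in Lemmas~\ref{lem: bigger than n}, \ref{lem: odd centl dim big}, and~\ref{lem: maximal ell}. The only point that deserves care is verifying that the $\GL(n)$-conjugation used to reduce to $\chi(\nnn_\bz^+) = 0$ preserves module dimensions, which follows from the $\Ad$-equivariant isomorphism of reduced enveloping algebras.
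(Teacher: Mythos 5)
Your proposal is correct and follows essentially the same route as the paper: defer the $\tilde\ppp(n)$ case to Proposition~\ref{thm: first KW for p} (which already contains both the upper bound via baby Verma quotients and the attainment via regular semisimple $\chi$), and for $\qqq(n)$ combine Lemma~\ref{lem: odd centl dim big} (computing $b_1 = n^2$), Lemma~\ref{lem: maximal ell}, and Proposition~\ref{lem: hope} (irreducibility and dimension of the queer baby Verma at a strongly regular semisimple $\chi$). The one place you go beyond the paper's terse proof is in spelling out the upper bound for $\qqq(n)$ — that every irreducible $U_\chi(\ggg)$-module is a quotient of some $Z^\qqq_\chi(\mu)$ whose dimension $p^{n(n-1)/2}2^{n(n-1)/2+\dim\hat\hhh_\bo^\mu}$ is controlled by the minimality $\dim\hhh_\bo^\mu\ge\ell$ from Lemma~\ref{lem: maximal ell}, and then verifying $n(n-1)/2 + (n-\ell)$ equals the odd exponent in the conjectural bound. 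The paper leaves this implicit, so your explicit version is a genuine improvement in rigor; it rests on the standard fact (from \cite{WZ2}) that $\nnn^+$ acts nilpotently when $\chi(\nnn^+_\bz)=0$, so $S^{\nnn^+}\neq 0$ contains an irreducible $U_\chi(\hhh)$-module $V_\chi(\mu)$. Your final parity check is consistent with the paper's nonstandard convention that $\lceil\cdot\rceil$ denotes the floor and $\lfloor\cdot\rfloor$ the ceiling.
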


\begin{proof} Note that $b_0^\chi= n(n-1)$ coincides with $\max_{\theta\in\ggg_\bz^*}(b_0^\theta)$ when $\chi$ is regular semisimple (see \cite{FP88} or \cite{Jan97}).  We can proceed to verify.

(1) The part of verification for $\tilde\ppp(n)$ follows from Lemma \ref{lem: bigger than n} and Proposition \ref{thm: first KW for p}.

(2) As to the part $\qqq(n)$, by combining Lemma \ref{lem: odd centl dim big} with Proposition \ref{lem: hope}, $p^{b_0\over 2}2^{\lfloor{b_1\over 2}\rfloor}$ is equal to $p^{n(n-1)\over 2} 2^{\lfloor {n^2\over 2}\rfloor}$. The verification can be done.

The proof of the theorem is completed.
%
%
%
\end{proof}

\vskip20pt
\section{Verification for the case of derived strange classical algebras }
In this concluding section, we concisely demonstrate that the first super Kac-Weifeiler conjecture is still true for  derived strange classical algebras $[\tilde\ppp(n),\tilde\ppp(n)]$ and $[\qqq(n),\qqq(n)]$.

From now on we suppose $\ggg=[\tilde\ppp(n),\tilde\ppp(n)]$ or $[\qqq(n),\qqq(n)]$, as usually in literatures, both of which are  denoted by $\ppp(n)$ and  $\tilde\qqq(n)$ respectively.

\subsection{$\ggg=[\tilde\ppp(n),\tilde\ppp(n)]$}
The same arguments as in \S\ref{sec: periplectic sec show} enables us to have the following counterpart of Proposition \ref{thm: first KW for p}.
\begin{prop}\label{prop: first KW for derirved p}
 Let $\ggg=[\tilde\ppp(n),\tilde\ppp(n)]$. Then the maximal dimension of irreducible $\ggg$-modules  is  $p^{\frac{b_0}{2}}2^{\lfloor\frac{b_1}{2}\rfloor}=(2p)^{n(n-1)\over 2}$.
\end{prop}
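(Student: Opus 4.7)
The plan is to mirror exactly the strategy carried out for $\tilde\ppp(n)$ in Proposition \ref{thm: first KW for p}, replacing the even part $\gl(n)$ by $\mathfrak{sl}(n)$ throughout, and noting that the odd part, the odd root spaces, and hence the whole $\bbz_3$-graded structure $\ggg=\ggg_{-1}\oplus\ggg_0\oplus\ggg_1$ are inherited unchanged from $\tilde\ppp(n)$. Concretely, set $\hhh':=\hhh\cap\mathfrak{sl}(n)$ as the standard Cartan subalgebra of $\ppp(n)_\bz$, take $\bbb':=\hhh'\oplus\nnn_\bz^+$ with the same $\nnn_\bz^\pm$ as before, and define the baby Verma modules $Z_\chi(\lambda):=U_\chi(\ggg)\otimes_{U_\chi(\calb')}\bk_\lambda$ for $\calb':=\bbb'\oplus\ggg_1$ and $\lambda\in\Lambda(\chi):=\{\lambda\in(\hhh')^*\mid \lambda(H)^p-\lambda(H^{[p]})=\chi(H)^p,\,\forall H\in\hhh'\}$.

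First I would check the numerology. For any $\theta\in\ppp(n)_\bz^*$ one has $\dim\ggg_\bo^\theta\ge n$ by the identical argument to Lemma \ref{lem: bigger than n}(1) (the symmetric form $\tsbo_\theta$ on $\ggg_\bo$ still has $\ggg_1$ as an isotropic subspace of dimension $\frac{n(n+1)}{2}$), and for $\chi$ regular semisimple on $\mathfrak{sl}(n)^*$ the commutant $\ggg_\bo^\chi$ is exactly $\sum_i\bk E_{i,n+i}$ of dimension $n$, exactly as in Lemma \ref{lem: bigger than n}(2). Combined with the classical identity $\dim\mathfrak{sl}(n)^\chi=n-1$ for regular semisimple $\chi$, this gives $b_0=n(n-1)$ and $b_1=n(n-1)$, so the conjectural bound equals $p^{n(n-1)/2}2^{n(n-1)/2}=(2p)^{n(n-1)/2}$.

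Next I would prove the irreducibility of $Z_\chi(\lambda)$ for regular semisimple $\chi$, reproducing the argument of Lemma \ref{lem: top irr} and Proposition \ref{prop: reg ss} verbatim. The baby Verma $Z^0_\chi(\lambda)$ over $U_\chi(\mathfrak{sl}(n))$ is irreducible by the classical Friedlander--Parshall result applied to $\mathfrak{sl}(n)$; the top component $Z_\chi(\lambda)_{[N]}=\tsy\otimes Z^0_\chi(\lambda)$ is then isomorphic to $Z^0_\chi(\lambda+\delta)$ as a $U_\chi(\ppp(n)_\bz)$-module and hence irreducible; and the key identity $\tsx\tsy v=\Omega(\lambda)v$ with $\Omega(\lambda)=\prod_{i<j}(\lambda_i-\lambda_j+j-i-1)$ is a statement purely inside the subalgebra generated by the odd root vectors, so it carries over unchanged. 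Regular semisimplicity of $\chi\in(\hhh')^*$ again forces $\chi(H_i-H_j)\ne 0$ and hence $\lambda_i-\lambda_j+j-i-1\ne 0$ in $\bk$ for all $i<j$, so $\Omega(\lambda)\ne 0$ and the same bootstrap argument (start from any nonzero $w\in Z_\chi(\lambda)$, push to the top graded piece, then apply $\tsx$) yields irreducibility. Computing $\dim Z_\chi(\lambda)=p^{\dim\nnn_\bz^-}\cdot 2^{\dim\ggg_{-1}}=p^{n(n-1)/2}\cdot 2^{n(n-1)/2}=(2p)^{n(n-1)/2}$ then matches the bound from step two and finishes the proof.

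The only genuine subtlety, and the point I would verify most carefully, is the passage from $\hhh$ to the traceless Cartan $\hhh'$: the weight labels $\lambda_i=\lambda(H_i)$ used in $\Omega(\lambda)$ are no longer independent (they satisfy $\sum\lambda_i$-type relations reflecting $\hhh'\subset\hhh$), and the $\Lambda(\chi)$ set correspondingly shrinks. One must check that the regular semisimple condition on $\chi\in(\hhh')^*$ still allows one to choose $\lambda\in\Lambda(\chi)$ making every $\lambda_i-\lambda_j+j-i-1$ nonzero; this is immediate because only the differences $\lambda_i-\lambda_j$ enter, and these depend only on $\chi(H_i-H_j)\ne 0$, which is precisely the regular semisimple hypothesis. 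With that verified the remainder of the argument is a word-for-word transcription of the $\tilde\ppp(n)$ case.
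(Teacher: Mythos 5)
Your proposal is correct and follows exactly the route the paper takes: the paper dispatches Proposition \ref{prop: first KW for derirved p} with a single remark that the arguments of \S\ref{sec: periplectic sec show} carry over verbatim, and your write-up fills in precisely what that means, including the one genuine subtlety (that $\Omega(\lambda)$ depends only on the differences $\lambda(H_i-H_j)$, which are defined on $\hhh'=\hhh\cap\mathfrak{sl}(n)$ and are controlled by the regular semisimple hypothesis). Your numerology ($b_0=b_1=n(n-1)$, giving $(2p)^{n(n-1)/2}$, with the upper bound supplied by every irreducible being a quotient of some $Z_\theta(\mu)$ of this fixed dimension) matches the paper.
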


Notice again that  $\dim b_0^\chi\leq n(n-1)$ for all $\chi\in\ggg_\bz^*$ and the equality happens  whenever $\chi$ is regular semisimple. Hence $\max_{\theta\in\ggg_\bz^*}(b_0^\theta)$ is equal to $n(n-1)$.   On the other side, Lemma \ref{lem: bigger than n} still true for $\ggg=[\tilde\ppp(n),\tilde\ppp(n)]$. Hence we have

\begin{corollary}\label{cor: 5.2 p} The first super Kac-Weisfeiler conjecture is true for $\ggg=[\tilde\ppp(n),\tilde\ppp(n)]$.
\end{corollary}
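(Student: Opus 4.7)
The plan is to combine Proposition \ref{prop: first KW for derirved p} (which supplies the actual maximal irreducible dimension $(2p)^{n(n-1)/2}$) with a direct computation of the combinatorial quantity $p^{b_0/2}2^{\lfloor b_1/2\rfloor}$ attached to $\ggg=\ppp(n)$. So the job reduces to showing
\begin{equation*}
\max_{\theta\in \ggg_\bz^*}\Bigl(p^{b_0^\theta/2}2^{\lfloor b_1^\theta/2\rfloor}\Bigr)=(2p)^{n(n-1)/2}.
\end{equation*}

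First I would identify the structure: since $\tilde\ppp(n)=\ppp(n)\oplus \bk\tsfd$ with $\tsfd$ central, the odd part is unchanged, $\ppp(n)_\bo=\tilde\ppp(n)_\bo$, while $\ppp(n)_\bz\cong\mathfrak{sl}(n)$. Then I would handle the even estimate: for any $\theta\in\ggg_\bz^*$ the standard Kac--Weisfeiler analysis for $\mathfrak{sl}(n)$ (see \cite{FP88} or \cite{Jan97}) gives $b_0^\theta=\dim\mathfrak{sl}(n)-\dim\mathfrak{sl}(n)^\theta\le (n^2-1)-(n-1)=n(n-1)$, with equality realised by a regular semisimple $\chi$. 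Hence $b_0=n(n-1)$ and $p^{b_0/2}=p^{n(n-1)/2}$.

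Next I would handle the odd estimate, which is essentially Lemma \ref{lem: bigger than n} ported to the derived subalgebra. The proof there used only the symmetric bilinear form $\tsb_\theta^\bo$ on $\ggg_\bo$ induced by $\theta\in \ggg_\bz^*$, the isotropy of $\ggg_1$ with respect to this form, and the equality $\dim\ggg_1=\tfrac12(n^2+n)$; none of these ingredients involves $\tsfd$, so the same dimension-counting yields $\dim\ggg_\bo^\theta\ge n$, hence $b_1^\theta\le n^2-n$ for every $\theta\in\ggg_\bz^*$. Moreover, taking $\chi$ to be a regular semisimple $p$-character of $\ppp(n)_\bz\cong\mathfrak{sl}(n)$ (equivalently, the image of a regular semisimple element of $\mathfrak{gl}(n)$ under the natural projection) one still has $\ggg_\bo^\chi=\mathcal{H}$ by the exact argument given in Lemma \ref{lem: bigger than n}(2), so $b_1^\chi=n^2-n$ is attained. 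Therefore $\lfloor b_1/2\rfloor=n(n-1)/2$ and $2^{\lfloor b_1/2\rfloor}=2^{n(n-1)/2}$.

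Combining the two computations gives $p^{b_0/2}2^{\lfloor b_1/2\rfloor}=(2p)^{n(n-1)/2}$, which matches the maximal irreducible dimension from Proposition \ref{prop: first KW for derirved p} and proves the conjecture for $\ppp(n)$. The only delicate point is verifying that the extremal $b_0^\theta$ and $b_1^\theta$ are realised \emph{simultaneously} by a single regular semisimple $\chi$, rather than by two different $\theta$'s; this is the main (minor) obstacle, but it is handled by the observation that regular semisimplicity of $\chi$ for $\mathfrak{sl}(n)$ coincides with regular semisimplicity in the sense of Definition \ref{def: regular ss}, so the same $\chi$ simultaneously maximises both entries.
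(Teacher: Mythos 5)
Your proof is correct and takes essentially the same route as the paper: combine Proposition \ref{prop: first KW for derirved p} with the computations $b_0=n(n-1)$ (from the $\mathfrak{sl}(n)$ centraliser bound) and $b_1=n^2-n$ (Lemma \ref{lem: bigger than n} transferred to the derived subalgebra), both extremes realised simultaneously by a regular semisimple $p$-character. One small inaccuracy: $\tsfd$ is \emph{not} central in $\tilde\ppp(n)$ (it is the grading element, with $[\tsfd,\ggg_{\pm 1}]=\pm 2\ggg_{\pm 1}$), only in $\tilde\ppp(n)_\bz$; this does not affect your argument since the facts you actually use, namely $\ppp(n)_\bo=\tilde\ppp(n)_\bo$ and $\ppp(n)_\bz\cong\mathfrak{sl}(n)$, hold regardless.
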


\subsection{$\ggg=\tilde\qqq(n)$}
Parallel to Proposition \ref{lem: hope} we have the following result

\begin{prop}\label{prop: first KW for derirved q} Let $\ggg=[\qqq(n), \qqq(n)]$. Then the maximal dimension of irreducible $\ggg$-modules  is  $p^{\frac{b_0}{2}}2^{\lfloor\frac{b_1}{2}\rfloor}=p^{n(n-1)\over 2} 2^{\lfloor{{n^2-1}\over 2}\rfloor}$. Consequently, the first super Kac-Weifeiler conjecture is true for $\ggg$.
\end{prop}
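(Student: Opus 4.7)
The plan is to mirror the $\qqq(n)$ argument in Theorem \ref{thm: kw conj for strange}(2), replacing $\qqq(n)$ with $\ggg := [\qqq(n), \qqq(n)]$ throughout. First I would identify the structure of $\ggg$: I expect $\ggg_\bz$ to recover all of $\gl(n)$ (since $\{J'_i, J'_i\} = 2J_i$ together with $[\gl(n), \gl(n)] = \sssl(n)$ recovers everything), while $\ggg_\bo$ is the traceless copy of $\gl(n)$ inside $\qqq(n)_\bo$, so $\underline{\dim}\,\ggg = (n^2|n^2-1)$. The Cartan and Borel data restrict cleanly, with only $\hhh_\bo$ shrinking from the $n$-dimensional diagonal to the $(n-1)$-dimensional traceless diagonal $\{\sum_i c_i J'_i : \sum_i c_i = 0\}$; the nilpotent pieces $\nnn^\pm$ and their even/odd root-space decompositions persist from $\qqq(n)$ unchanged, in particular $\dim\nnn_\bz^- = \dim\nnn_\bo^- = n(n-1)/2$.

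\textbf{Invariants $b_0$ and $b_1$.} Next I would compute $b_0$ and $b_1$ for $\ggg$. Since $\ggg_\bz = \gl(n)$ as for $\qqq(n)$, the classical identity $\max_\theta b_0^\theta = n(n-1)$ (attained at any regular semisimple $\theta$) transfers verbatim. For $b_1$, I plan to fix a strongly regular semisimple $\chi$ in the sense of Definition \ref{defn: regular ss for q}(2) (this definition uses only $\hhh_\bz$, shared between $\qqq(n)$ and $\ggg$) and redo the proof of Lemma \ref{lem: odd centl dim big}: cases (1) and (2) there use witnesses $Y'_{\alpha_i}, X'_{\alpha_i}$ which still lie in $\ggg_\bo$, and only the ``diagonal'' case needs modification. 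On the basis $\{J'_i - J'_{i+1}\}_{i=1}^{n-1}$ of $\hhh_\bo$, the Gram matrix of $\tsg_\chi$ is tridiagonal with diagonal entries $2(\chi(J_i) + \chi(J_{i+1}))$ and sub/superdiagonal entries $-2\chi(J_{i+1})$; I expect its determinant to be nonzero under the strongly regular semisimple hypothesis, yielding $\ggg_\bo^\chi = 0$ and hence $b_1 = n^2 - 1$.

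\textbf{Baby Verma modules and the lower bound.} I would then form the queer baby Verma module $Z^\qqq_\chi(\lambda) = U_\chi(\ggg) \otimes_{U_\chi(\bbb)} V_\chi(\lambda)$ exactly as in \S\ref{sec: strange baby}. The same non-degeneracy of $\tsf_\lambda$ on the smaller $\hhh_\bo$ should give $\dim \hhh_\bo^\lambda = \lceil (n-1)/2 \rceil$, hence $\dim V_\chi(\lambda) = 2^{\lfloor (n-1)/2 \rfloor}$. Checking that Theorem \ref{thm: wz2 thm q} still forces irreducibility, and that $\Phi(\lambda) \ne 0$ because the conditions $\chi(J_i \pm J_j) \ne 0$ together with $\lambda \in \Lambda(\chi)$ force each factor $\phi(\lambda_i, \lambda_j) = (\lambda_i + \lambda_j)\prod_{k=1}^{p-1}(\lambda_i - \lambda_j - k)$ to be nonzero, the calculation
\[
\dim Z^\qqq_\chi(\lambda) \;=\; p^{\,n(n-1)/2}\cdot 2^{\,n(n-1)/2 + \lfloor (n-1)/2 \rfloor} \;=\; p^{\,n(n-1)/2}\cdot 2^{\,\lfloor (n^2-1)/2 \rfloor}
\]
(the last equality a two-line parity check in $n$) provides an irreducible $\ggg$-module realizing the conjectured maximum $p^{b_0/2} 2^{\lfloor b_1/2 \rfloor}$.

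\textbf{Main obstacle.} The only nontrivial step will be verifying that the Wang-Zhao irreducibility criterion Theorem \ref{thm: wz2 thm q}, stated in \cite{WZ2} for $\qqq(n)$, still applies to $\ggg = [\qqq(n), \qqq(n)]$. I expect this to go through because the proof is purely combinatorial in the root datum and needs only the non-degeneracy of $\tsf_\lambda$ on $\hhh_\bo$, both of which persist after removing the one-dimensional odd ``trace'' direction $\sum_k J'_k$; but writing this adaptation down carefully is the bulk of the work. The matching upper bound $\dim L \le p^{b_0/2} 2^{\lfloor b_1/2 \rfloor}$ on any irreducible reduced module is the standard super Kac-Weisfeiler-type inequality already invoked in Proposition \ref{thm: first KW for p} and Theorem \ref{thm: kw conj for strange}, so combining the two bounds completes the verification of the first super Kac-Weisfeiler conjecture for $\tilde\qqq(n)$.
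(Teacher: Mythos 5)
Your approach matches the paper's: the same structural identification of $\ggg=[\qqq(n),\qqq(n)]$ (namely $\ggg_\bz\cong\gl(n)$, $\ggg_\bo$ the traceless copy of dimension $n^2-1$, and $\hhh_\bo$ shrinking to the $(n-1)$-dimensional traceless diagonal while the root spaces persist), the same transfer of $b_0=n(n-1)$ from $\gl(n)$, the same queer baby Verma induction and appeal to Theorem \ref{thm: wz2 thm q}, and the same parity identity $\frac{n(n-1)}{2}+\lfloor\frac{n-1}{2}\rfloor=\lfloor\frac{n^2-1}{2}\rfloor$ for the dimension count. You are in fact more explicit than the paper in writing down the tridiagonal Gram matrix of $\tsg_\chi$ on the traceless diagonal in the basis $\{J'_i-J'_{i+1}\}$, and you correctly flag the main verification needed (that the Wang--Zhao criterion continues to govern the queer baby Verma modules for the derived subalgebra).

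There is, however, one concrete flaw in the step ``I expect its determinant to be nonzero under the strongly regular semisimple hypothesis.'' The determinant of that $(n-1)\times(n-1)$ tridiagonal matrix with diagonal entries $2(\chi(J_i)+\chi(J_{i+1}))$ and off-diagonals $-2\chi(J_{i+1})$ is, up to a nonzero scalar, $\prod_i\chi(J_i)\cdot\sum_i\chi(J_i)^{-1}$; equivalently, the radical of $\tsg_\chi$ restricted to the traceless diagonal is spanned by $\sum_i\chi(J_i)^{-1}J'_i$ precisely when $\sum_i\chi(J_i)^{-1}=0$. Strong regular semisimplicity (all $\chi(J_i)$ nonzero, pairwise distinct, and $\chi(J_i+J_j)\ne0$) does \emph{not} preclude $\sum_i\chi(J_i)^{-1}=0$: for $n=3$ and $p=7$, take $\chi(J_1)=1$, $\chi(J_2)=2$, $\chi(J_3)=4$; then all of Definition \ref{defn: regular ss for q} holds, yet $1+4+2=0$ in $\bbf_7$. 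The same caveat applies to $\tsf_\lambda$ on $\hhh_\bo$. So you must add the Zariski-open condition $\sum_i\chi(J_i)^{-1}\ne0$ (equivalently $e_{n-1}(\chi(J_1),\ldots,\chi(J_n))\ne0$), and the corresponding condition on $\lambda$, to your genericity hypotheses. This does not damage the argument---such $\chi$ and $\lambda$ still exist in Zariski-dense abundance, and the upper bound on $\dim Z^\qqq_\chi(\lambda)$ goes through unchanged for all $\lambda$ since $\dim\hat\hhh_\bo^\lambda$ can only drop when the form degenerates---but as written your hypothesis is strictly insufficient for the claimed non-degeneracy. (The paper's own proof glosses over the same point by appealing to ``the same arguments as Lemma \ref{lem: maximal ell},'' which do not automatically survive restriction to a hyperplane; your Gram-matrix computation is exactly what exposes this.)
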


\begin{proof} The proof essentially follows the arguments in the case of $\qqq(n)$. The difference lies in the Cartan subalgebras which influences the final result on maximal dimensions of irreducible modules,  according to the change of   queer baby Verma modules defined as in (\ref{eq: q baby}).

Note that in comparison with $\qqq(n)$,  the Cartan subalgebra $\hhh$ of $\ggg=[\qqq(n),\qqq(n)]$ changes into $\hhh=\hhh_\bz\oplus\hhh_\bo$ with $\hhh_\bo$ admitting dimension $n-1$.
By the same arguments as Lemma \ref{lem: maximal ell}, we can prove that the least one of all dimensions of $\hhh_\bo^\lambda$ with $\chi\in\ggg^*_\bz$ and $\lambda\in\Lambda(\chi)$ is $\lceil{{n-1}\over 2}\rceil$. Moreover, $\dim\hhh_\bo^\lambda=\lceil{{n-1}\over 2}\rceil$ whenever $\chi$ is a strongly regular semisimple $p$-character and $\lambda\in \Lambda(\chi)$.

Consequently, by almost same arguments as in Proposition \ref{lem: hope} with some mild modification we can show that if $\chi$ is strongly regular semisimple $p$-character as in the proposition, then  the queer baby Verma module
$$Z^\qqq_\chi(\lambda)=U_\chi(\ggg)\otimes_{U_\chi(\bbb)}
V_\chi(\lambda)$$ is irreducible for  $\lambda\in \Lambda(\chi)$
and $V_\chi(\lambda) = U_\chi(\hhh)\otimes_{U_\chi(\hhh_\bz\oplus \hhh^\lambda_\bo)}\bk_\lambda$.
In this case,  $$\dim Z_\chi^\qqq(\lambda)=p^{n(n-1)\over 2} 2^{\lfloor {{n^2-1}\over 2}\rfloor}.$$

As in the case $[\tilde\ppp(n),\tilde\ppp(n)]$,  $\dim b_0^\chi\leq n(n-1)$ for any $\chi\in\ggg_\bz^*$ and the equality happens  whenever $\chi$ is regular semisimple. Hence $\max_{\theta\in\ggg_\bz^*}(b_0^\theta)$ is equal to $n(n-1)$. The above analysis already shows that  the maximal dimension of irreducible $\ggg$-modules  is  $p^{\frac{b_0}{2}}2^{\lfloor\frac{b_1}{2}\rfloor}=p^{n(n-1)\over 2} 2^{{n^2-1}\over 2}$. The first super Kac-Weifeiler conjecture is proved  for $\ggg$.
\end{proof}


\begin{thebibliography}{99}




\bibitem{BDE} M. Balagovic, Z. Daugherty, I. Entova-Aizenbud, I. Halacheva, J.Hennig, M. Im, G. Letzter, E. Norton, V. Serganova, and  C. Stroppel,  {\em Translation functors and decomposition numbers for the periplectic Lie superalgebra $\ppp(n)$}, Math. Res. Lett. 26 (2019), 643-710.

 \bibitem{B06} J. Brundan, {\em Modular representations of the supergroup $Q(n)$, II},  Pacific J. Math. 224, 65-90 (2006)


\bibitem{BKl} J. Brundan and A. Kleshchev, {\em Modular representations of the supergroup Q(n), I},  J. Algebra 260 (2003), 64-98.
\bibitem{BKu} J. Brundan  and J. Kujawa, {\em A new proof of the Mullineux conjecture}, J. Algebraic Combin. 18 (2003), 13-39.


\bibitem{Chen} C. Chen, {\em Finite-dimensional representations of periplectic Lie superalgebras},  J. Algebra 443 (2015), 99-125.
\bibitem{CC}  C. Chen and K. Coulembier, {\em The primitive spectrum and category $\mathcal{O}$ for the periplectic Lie superalgebra}, Canad. J. Math. 72 (2020), 625-655.
 \bibitem{CSW}  S. Cheng, B. Shu and W. Wang, {\em Modular representations of exceptional supergroups}, Math. Z. 291 (2019), 635-659.


\bibitem{CW} S. Cheng and W. Wang, {\em Dualities and representations of Lie superalgebras}, Amer. Math. Soc. 2012.

\bibitem{Coll} D. Collingwood and W. McGovern, {\em Nilpotent orbits in semisimple Lie algebras}, Van Nostrand Reinhold Mathematics Series. Van Nostrand Reinhold Co., New York, 1993.

    \bibitem{ES1} I. Entova-Aizenbud and V.Serganova,  {\em Kac-Wakimoto conjecture for the periplectic Lie superalgebra}, J. Algebra Appl. 20 (2021), no. 1, Paper No. 2140015, 13 pp.
\bibitem{ES2} I. Entova-Aizenbud and V.Serganova, {\em Duflo-Serganova functor and superdimension formula for the periplectic Lie superalgebra}, Algebra Number Theory 16 (2022), 697-729.

    \bibitem{FG1} R. Fioresi and F. Gavarini, {\em Chevalley supergroups}, Mem. Amer. Math. Soc. 215 (2012), no. 1014.


\bibitem{Gor} M. Gorelik, {\em The center of a simple p-type Lie superalgebra}, J. Algebra 246 (2001) 414-428.

\bibitem{FP88} B. Parshall and E. Friedlander, {\em Modular representation theory of Lie algebras}, Amer. J. Math. 110 (1988), 1055-1093.

\bibitem{Jan97} J. C. Jantzen, {\em Representations of Lie algebras in prime characteristic},  NATO Adv. Sci. Inst. Ser. C: Math. Phys. Sci., 514, Representation theories and algebraic geometry (Montreal, PQ, 1997), 185-235, Kluwer Acad. Publ., Dordrecht, 1998.

\bibitem{Jan3}  J. C. Jantzen, {\em Nilpotent orbits in representation theory},  Lie theory, 1-211, Progr. Math., 228, Birkh?user Boston, Boston, MA, 2004.

 \bibitem{Kac}  V. G. Kac, {\em Lie superalgebras}, Advances in Math. 26 (1977), 8-96.
     \bibitem{Kac2} V. G. Kac, Review on \cite{Pre}, MR 96g: 17007.

     \bibitem{LS} Y. Li and B. Shu, {\em Jantzen filtration of Weyl modules
for general linear supergroups}, accepted for publication in Forum. Math. 2023.


\bibitem{Man} Yu. I. Manin, {\em Gauge Field Theory and Complex Geometry}, 2nd ed., Grundlehren der Math. Wissenschaften 289, Springer, Berlin, 1982.

\bibitem{MST} B. Martin, D. Stewart and  L. Topley, {\em A proof of the first Kac-Weisfeiler conjecture in large characteristics,
With an appendix by T. Akaki}, Represent. Theory 23 (2019), 278-293.

\bibitem{PS} L. Pan and B. Shu, {\em Jantzen filtration and strong linkage principle for modular Lie superalgebras}, Forum. Math. 30 (2018), 1573-1598.

\bibitem{PrSk} A. Premet and S. Skryabin, {\em Representations of restricted Lie algebras and families of associative $\mathscr{L}$-algebras},
J. Reine Angew. Math. 507 (1999), 189-218.







\bibitem{Pre} A. Premet, {\em Irreducible representations of Lie algebras of reductive groups and the Kac-Weisfeiler conjecture}, Invent. Math. 121 (1995), 79-117.






\bibitem{Ser} S. Serganova, {\em On representations of the periplectic Lie supalgebra $\frak{p}(n)$}, J. Algebra 258 (2002), 615-630.

\bibitem{Shi} T. Shibata, {\em Borel-Weil theorem for algebraic supergroups} J. Algebra 547 (2020), 179-219.


\bibitem{Shu} B. Shu, {\em Irreducible modules of modular Lie superalgebras and super version of the first Kac-Weisfeiler conjecture}, submitted, 2023.

 \bibitem{SW} B. Shu and W. Wang, {\em Modular representations of the ortho-symplectic supergroups}, Proc. Lond. Math. Soc. 96 (2008), 251-271.








\bibitem{SF} H. Strade and R. Farnsteiner, {\em Modular Lie algebras and their representations},  Marcel Dekker, New York, 1988.



\bibitem{WZ1} W. Wang and L. Zhao, {\em Representations of Lie superalgebras in prime characteristic I}, Proc. Lond. Math. Soc. (3) 99 (2009), 145-167.
\bibitem{WZ2} W. Wang and L. Zhao, {\em Representations of Lie superalgebras in prime characteristic II: The queer series}, J. Pure Appl.
Algebra 215 (2011), 2515-2532.

\bibitem{WK} B. J. Weifeiler and V. G. Kac, {\em The irreducible representations of Lie $p$-algebras}, Funcional. Anal. i Prilo$\breve{z}$en.  5 (1971), 28-36.







\bibitem{Zass} H. Zassenhaus, {\em The representations of Lie algebras of prime characteristic}, Proc. Glasgow Math. Assoc. 2 (1954), 1-36.

\bibitem{ZengS1} Y. Zeng and B. Shu, {\em On Kac-Weisfeiler modules for general and special linear Lie superalgebras}, Israel J. Math. 214
(2016), 471-490.
\bibitem{ZengS2} Y. Zeng and B. Shu, {\em Finite W-superalgebras and dimensional lower bounds for the representations of basic Lie
superalgebras}, Publ. Res. Inst. Math. Sci. 53 (2017), 1-63.


\bibitem{Zhao} L. Zhao, {\em Representations of Lie superalgebras in prime characteristic, III}, Pacific J. Math. 248 (2010), 493-510.



    \end{thebibliography}
\end{document}